\documentstyle[amscd,amssymb,xypic,verbatim,11pt]{amsart}
\xyoption{all}

\topmargin=-1.5cm
\oddsidemargin=-1cm
\evensidemargin=-.5cm
\textwidth=17.5cm
\textheight=23.5cm

\newcommand{\nc}{\newcommand}
\newcommand{\rnc}{\renewcommand}

\nc{\exto}[1]{\stackrel{#1}{\longrightarrow}}
\nc{\dlim}{{\mathop{\lim\limits_{\longrightarrow}}}}
\nc{\lan}{\big\langle}
\nc{\ran}{\big\rangle}

\nc{\kk}{{\mathsf{k}}}
\nc{\ix}{{\mathsf{i}}}
\nc{\jx}{{\mathsf{j}}}

\nc{\C}{{\mathbb{C}}}
\nc{\HH}{{\mathbb{H}}}
\nc{\LL}{{\mathbb{L}}}
\nc{\PP}{{\mathbb{P}}}
\nc{\QQ}{{\mathbb{Q}}}
\nc{\RR}{{\mathbb{R}}}
\nc{\TT}{{\mathbb{T}}}
\nc{\ZZ}{{\mathbb{Z}}}

\nc{\CA}{{\mathcal{A}}}
\nc{\CB}{{\mathcal{B}}}
\nc{\CC}{{\mathcal{C}}}
\nc{\D}{{\mathcal{D}}}
\nc{\CE}{{\mathcal{E}}}
\nc{\CF}{{\mathcal{F}}}
\nc{\CG}{{\mathcal{G}}}
\nc{\CH}{{\mathcal{H}}}
\nc{\CJ}{{\mathcal{J}}}
\nc{\CK}{{\mathcal{K}}}
\nc{\CL}{{\mathcal{L}}}
\nc{\CM}{{\mathcal{M}}}
\nc{\CMF}{{\mathcal{MF}}}
\nc{\CMFI}{{\mathcal{MFI}}}
\nc{\CMFB}{{\mathcal{MFB}}}
\nc{\CN}{{\mathcal{N}}}
\nc{\CO}{{\mathcal{O}}}
\nc{\CQ}{{\mathcal{Q}}}
\nc{\CR}{{\mathcal{R}}}
\nc{\CS}{{\mathcal{S}}}
\nc{\CT}{{\mathcal{T}}}
\nc{\CU}{{\mathcal{U}}}
\nc{\CV}{{\mathcal{V}}}
\nc{\CW}{{\mathcal{W}}}
\nc{\CX}{{\mathcal{X}}}
\nc{\CY}{{\mathcal{Y}}}
\nc{\CZ}{{\mathcal{Z}}}
\nc{\CMo}{{\mathcal{M}^\circ}}
\nc{\Co}{{{C}^\circ}}

\nc{\BY}{{\overline{Y}}}
\nc{\BYD}{{\overline{Y}{}^{|D|}}}
\nc{\OZ}{{\overline{Z}}}
\nc{\bg}{{\bar{g}}}

\nc{\bq}{{\mathbf{q}}}
\nc{\BD}{{\mathbf{D}}}
\nc{\BG}{{\mathbf{G}}}
\nc{\BM}{{\mathbf{M}}}
\nc{\BP}{{\mathbf{P}}}
\nc{\BZ}{{\mathbf{Z}}}
\nc{\BPr}{{\mathsf{P}}}
\nc{\BR}{{\mathbf{R}}}
\nc{\BRO}[1]{{{\mathbf{R}}^{\circ}_{#1}}}
\nc{\BRD}[1]{{{\mathbf{R}}^{|D|}_{#1}}}
\nc{\BRP}[1]{{{\mathbf{R}}^{1}_{#1}}}
\nc{\BRTP}[1]{{{\mathbf{\tilde{R}}}{}^{1}_{#1}}}
\nc{\BS}{{\mathbf{S}}}
\nc{\BMS}{{{\mathbf{M}}^{{s}}}}
\nc{\BMSS}{{{\mathbf{M}}^{{ss}}}}
\nc{\BMZ}{{\mathbf{M}^{\circ}}}
\nc{\BCL}{{\mathbf{L}}}

\nc{\PCC}{{{}^\perp\CC}}

\nc{\ch}{{\mathsf{ch}}}
\nc{\Cl}{{\mathsf{Cliff}}}
\nc{\Clev}{{\mathop{\mathsf{Cliff}}^{\circ}}}

\nc{\FA}{{\mathfrak{A}}}
\nc{\FB}{{\mathfrak{B}}}
\nc{\FF}{{\mathfrak{F}}}
\nc{\FI}{{\mathfrak{I}}}
\nc{\FZ}{{\mathfrak{Z}}}

\nc{\TFA}{{\tilde{\mathfrak{A}}}}
\nc{\TFB}{{\tilde{\mathfrak{B}}}}

\nc{\fa}{{\mathfrak{a}}}
\nc{\fg}{{\mathfrak{g}}}
\nc{\fp}{{\mathfrak{p}}}
\nc{\FD}{{\mathfrak{D}}}
\nc{\FE}{{\mathfrak{E}}}
\nc{\FL}{{\mathfrak{L}}}
\nc{\FM}{{\mathfrak{M}}}
\nc{\FR}{{\mathfrak{R}}}
\nc{\FS}{{\mathsf{S}}}

\nc{\sfc}{{\mathsf{c}}}
\nc{\sfch}{{\mathsf{ch}}}
\nc{\sfh}{{\mathsf{h}}}

\nc{\SK}{{\mathsf{K}}}
\nc{\SO}{{\mathsf{O}}}
\nc{\SQ}{{\mathsf{Q}}}
\nc{\SPV}{{\mathsf{S}^+\mathsf{V}}}
\nc{\SMV}{{\mathsf{S}^-\mathsf{V}}}
\nc{\SPMV}{{\mathsf{S}^\pm\mathsf{V}}}
\nc{\SX}{{S_X}}
\nc{\SY}{{S_Y}}
\nc{\phipsi}{{q}}
\nc{\eps}{\varepsilon}

\nc{\pim}{{\pi_-}}
\nc{\pip}{{\pi_+}}

\nc{\BE}{{{\mathbf E}}}
\nc{\TD}{{\widetilde{\D}}}
\nc{\TFD}{{\widetilde{\FD}}}
\nc{\TE}{{\tilde{\CE}}}
\nc{\TQ}{{\tilde{Q}}}
\nc{\TCA}{{\tilde{\CA}}}
\nc{\TCF}{{\tilde{\CF}}}
\nc{\TCG}{{\tilde{\CG}}}
\nc{\TCH}{{\tilde{\CH}}}
\nc{\TCL}{{\tilde{\CL}}}
\nc{\TF}{{\tilde{F}}}
\nc{\TW}{{\tilde{W}}}
\nc{\TCB}{{\widetilde{\CB}}}
\nc{\TCC}{{\tilde{\CC}}}
\nc{\TCX}{{\tilde{\CX}}}
\nc{\TCY}{{\tilde{\CY}}}
\nc{\TPhi}{{\tilde{\Phi}}}
\nc{\OPhi}{{\bar{\Phi}}}
\nc{\txi}{{\tilde{\xi}}}
\nc{\tp}{{\tilde{p}}}
\nc{\tq}{{\tilde{q}}}
\nc{\tzeta}{{\tilde{\zeta}}}
\nc{\tpi}{{\tilde{\pi}}}

\nc{\HCB}{{\widehat{\CB}}}
\nc{\HCU}{{\widehat{\CU}}}
\nc{\HE}{{\widehat{\CE}}}
\nc{\HS}{{\widehat{S}}}
\nc{\HX}{{\hat{X}}}
\nc{\HY}{{\hat{Y}}}
\nc{\HZ}{{\hat{Z}}}
\nc{\hxi}{{\hat{\xi}}}

\nc{\UH}{{\mathcal{H}}}

\nc{\TM}{{\widetilde{M}}}
\nc{\TCM}{{\widetilde{\CM}}}
\nc{\TS}{{\widetilde{S}}}
\nc{\TU}{{\widetilde{U}}}
\nc{\TX}{{\widetilde{X}}}
\nc{\TY}{{\widetilde{Y}}}
\nc{\TZ}{{\widetilde{Z}}}
\nc{\TYO}{{{\widetilde{Y}}^\circ}}
\nc{\barf}{{\bar{f}}}
\nc{\te}{{\tilde{e}}{}}
\nc{\tf}{{\tilde{f}}}
\nc{\tg}{{\tilde{g}}}
\nc{\ti}{{\tilde{\imath}}}
\nc{\tj}{{\tilde{\jmath}}}
\nc{\ty}{{\tilde{y}}}
\nc{\tphi}{{\tilde{\phi}}}
\nc{\hf}{{\hat{f}}}

\nc{\urho}{{\underline{\rho}}}

\nc{\LRA}{\Leftrightarrow}
\nc{\RA}{\Rightarrow}
\nc{\lotimes}{\mathbin{\mathop{\otimes}\limits^{\mathbb{L}}}}
\nc{\CEnd}{\mathop{\mathcal{E}\mathit{nd}}\nolimits}
\nc{\CExt}{\mathop{\mathcal{E}\mathit{xt}}\nolimits}
\nc{\CHom}{\mathop{\mathcal{H}\mathit{om}}\nolimits}
\nc{\RH}{\mathop{{\mathsf{R}}\Gamma}\nolimits}
\nc{\RGamma}{\mathop{{\mathsf{R}}\Gamma}\nolimits}
\nc{\RHom}{\mathop{\mathsf{RHom}}\nolimits}
\nc{\RCHom}{\mathop{\mathsf{R}\mathcal{H}\mathit{om}}\nolimits}
\nc{\RG}{\mathop{\mathsf{R\Gamma}}\nolimits}
\nc{\Hom}{\mathop{\mathsf{Hom}}\nolimits}
\nc{\Ext}{\mathop{\mathsf{Ext}}\nolimits}
\nc{\End}{\mathop{\mathsf{End}}\nolimits}
\nc{\Tor}{\mathop{\mathsf{Tor}}\nolimits}
\nc{\Tordim}{\mathop{\mathsf{Tor}\text{\rm-}\mathsf{dim}}\nolimits}
\nc{\Hilb}{\mathop{\mathsf{Hilb}}\nolimits}
\nc{\Spec}{\mathop{\mathsf{Spec}}\nolimits}
\nc{\Proj}{\mathop{\mathsf{Proj}}\nolimits}
\nc{\Pic}{\mathop{\mathsf{Pic}}\nolimits}

\nc{\Tw}{\mathop{\mathsf{Tw}}\nolimits}
\nc{\Cone}{\mathop{\mathsf{Cone}}\nolimits}
\nc{\Ker}{\mathop{\mathsf{Ker}}\nolimits}
\nc{\Coker}{\mathop{\mathsf{Coker}}\nolimits}
\nc{\codim}{\mathop{\mathsf{codim}}\nolimits}
\nc{\sing}{{\mathsf{sing}}}
\nc{\supp}{\mathop{\mathsf{supp}}}
\nc{\perf}{{\mathsf{perf}}}
\nc{\rank}{\mathop{\mathsf{rank}}}
\nc{\Pf}{{\mathsf{Pf}}}
\nc{\Gr}{{\mathsf{Gr}}}
\nc{\OGr}{{\mathsf{OGr}}}
\nc{\SGr}{{\mathsf{SGr}}}
\nc{\Flag}{{\mathsf{Fl}}}
\nc{\Kosz}{{\mathsf{Kosz}}}
\nc{\LGr}{{\mathsf{LGr}}}
\nc{\GTGr}{{\mathsf{G_2Gr}}}
\nc{\GT}{{\mathsf{G_2}}}
\nc{\GTF}{{\mathsf{G_2F}}}
\nc{\OF}{{\mathsf{OF}}}
\nc{\Fl}{{\mathsf{Fl}}}
\nc{\Bl}{{\mathsf{Bl}}}
\nc{\GL}{{\mathsf{GL}}}
\nc{\PGL}{{\mathsf{PGL}}}
\nc{\SL}{{\mathsf{SL}}}
\nc{\SP}{{\mathsf{Sp}}}
\nc{\Spin}{{\mathsf{Spin}}}
\nc{\Tot}{{\mathsf{Tot}}}
\nc{\ev}{{\mathsf{ev}}}
\nc{\od}{{\mathsf{odd}}}
\nc{\coev}{{\mathsf{coev}}}
\nc{\id}{{\mathsf{id}}}
\nc{\opp}{{\mathsf{opp}}}
\nc{\PS}{{{\PP^3}}}
\nc{\Qu}{{\mathsf{Q}}}
\nc{\tdim}{\mathop{\Tor\dim}}
\nc{\ecart}{{\fbox{$\scriptstyle\mathsf{EC}$}}}
\nc{\ad}{{\mathop{\mathsf ad}}}
\nc{\gr}{{\mathop{\mathsf gr}}}
\nc{\qgr}{{\mathop{\mathsf qgr}}}
\nc{\tor}{{\mathop{\mathsf tor}}}
\rnc{\mod}{{\mathop{\mathsf mod}}}
\nc{\Mod}{{\mathop{\mathsf Mod}}}
\nc{\Coh}{{\mathop{\mathsf Coh}}}
\nc{\Ab}{{\mathop{\mathcal{A}\mathit{b}}}}
\nc{\QCoh}{{\mathop{\mathsf QCoh}}}

\nc{\AAV}{{\mathcal{AAV}}}

\nc{\Rep}{{\mathsf{Rep}}}

\nc{\Cubics}{{{\mathcal{S}}_3}}
\nc{\VFT}{{{\mathcal{S}}_{14}}}
\nc{\VFTE}{{{\mathcal{N}}_{\mathrm{reg,sm}}}}
\nc{\MX}{{\CM_X}}
\nc{\MY}{{\CM_Y}}
\nc{\MYE}{{\CM_{Y,\CE}}}
\nc{\Yd}{{Y_d}}
\nc{\Yfive}{{Y_5}}
\nc{\Xg}{{X_{2g-2}}}
\nc{\Xtt}{{X_{22}}}
\nc{\Xst}{{X_{16}}}
\nc{\Xtw}{{X_{12}}}
\nc{\Xe}{{X_{8}}}
\nc{\Xf}{{X_{4}}}

\nc{\git}{{/\!\!/\!{}_\chi}}

\theoremstyle{plain}

\newtheorem{theorem}{Theorem}[section]
\newtheorem{conjecture}[theorem]{Conjecture}
\newtheorem{lemma}[theorem]{Lemma}
\newtheorem{proposition}[theorem]{Proposition}
\newtheorem{corollary}[theorem]{Corollary}

\theoremstyle{definition}

\newtheorem{definition}[theorem]{Definition}

\theoremstyle{remark}

\newtheorem{remark}[theorem]{Remark}

\newenvironment{proof}{\noindent{\sf Proof:}}{\qed\medskip}

\title{Derived categories of Fano threefolds}
\author{Alexander Kuznetsov}
\subjclass{14M15, 18E30}
\address{\sloppy
\parbox{0.9\textwidth}{
Algebra Section, Steklov Mathematical Institute,
8 Gubkin str., Moscow 119991 Russia
\hfill\\[5pt]
The Poncelet Laboratory, Independent University of Moscow
\hfill\\[5pt]
}}
\email{akuznet@@mi.ras.ru}
\date{}
\thanks{I was partially supported by
RFFI grants 05-01-01034, 07-01-00051 and 07-01-92211,
INTAS 05-1000008-8118,
the Russian Science Support Foundation,
and gratefully acknowledge of the support of the Pierre Deligne fund based on his 2004
Balzan prize in mathematics.}

\begin{document}

\begin{abstract}
We consider the structure of the derived categories of coherent sheaves
on Fano threefolds with Picard number $1$ and describe a strange relation
between derived categories of different threefolds. In the Appendix we discuss
how the ring of algebraic cycles of a smooth projective variety is related
to the Grothendieck group of its derived category.
\end{abstract}

\maketitle

\section{Introduction}

A smooth proper connected algebraic variety $V$ is a Fano variety if
the anticanonical class $-K_V$ on~$V$ is ample. In dimension $1$
the only Fano variety is the projective line $\PP^1$. In dimension $2$
the Fano varieties are known under the name of del Pezzo surfaces.
There are 10 deformation classes of these --- the projective plane
with up to 8 blown up points (in generic position) and the quadric.

An ambitious program of classification of Fano threefolds was initiated
by G.~Fano in the beginning of 20-th century and was mostly accomplished
by V.~Iskovskikh in~1979~\cite{Is}. The final stroke of brush was added by Mukai
and Umemura in 1983~\cite{MU}. In higher dimensions only some pieces of the classification
are known.

There are many interconnections between Fano varieties, which help in the classification problems.
For example, a hyperplane section of a Fano $n$-fold $V$ is a Fano variety of dimension $n-1$ if
the anticanonical class of $V$ is sufficiently large. This is most helpful for classification
of Fano varieties with large anticanonical class. On the other hand, there are many birational
transformations between different Fano varieties of the same dimension. This also is very useful.
For example, the original approach of Fano developed by Iskovskikh was based on these kind of
interconnections (the double projection from a line is one of the most important).

The goal of the present paper is to indicate that there are interconnections between some Fano threefolds
on a higher level, the level of derived categories.
%
%
%
On one side we consider a Fano threefold of index~$2$ and of degree $d$,
an on the other side a Fano threefold of index $1$ and degree $4d + 2$.
Then we find in both derived categories an exceptional pair of vector bundles
and consider the arising semiorthogonal decompositions. The crucial observation
is that the nontrivial components of these decompositions are equivalent.

Actually, this is a very rough formulation. It is well known that in general Fano
varieties have nontrivial moduli spaces, so the components of the derived categories
which we consider in general vary. So, to be more precise one should say that there is
a correspondence in the product of the moduli spaces of both types of Fano threefolds,
points of which correspond to Fano threefolds with equivalent nontrivial components
of derived categories. Investigation of the structure of this correspondence is
an interesting question. We conjecture that this correspondece is dominant over
the both moduli spaces and give some speculations about its structure.

Although it is not so easy to prove that this relation holds (actually, we can check this
only for $d = 3,4,5$, while the cases $d = 1,2$ are still under the question), it is much
more difficult to understand {\em why}\/ such a relation takes place. We believe that
any progress in this direction will be very useful for understanding of the structure
of Fano varieties and for the classification in higher dimensions.

\section{Classification of Fano threefolds}

An excellent modern survey of the classification of Fano varieties is given in~\cite{IP}.
Let us briefly remind those parts of the classification which are the most important for us.
We will work over an algebraically closed field $\kk$ of zero characteristic.



The most important discrete invariant of a Fano variety
is its Picard lattice $\Pic V$ which comes with the intersection form and
with a distinguished element (the canonical class). In this paper we will be mostly
concerned with the (most important) case $\Pic V = \ZZ$. In this case the distinguished
element is represented by a positive integer $i_V$ such that
$$
K_V = - i_V H,
$$
where $H$ is the positive generator of $\Pic V$, and the intersection form
is completely determined by a positive integer
$$
d_V = H^{\dim V}.
$$
These invariants are known as the {\sf index} and the {\sf degree} of $V$ respectively.

The most general result concerning the index is the following

\begin{theorem}[\cite{Fu}]
If $V$ is a Fano variety of index $i_V$ then $i_V \le \dim V + 1$. Moreover
\begin{itemize}
\item if\/ $i_V = \dim V + 1$ then\/ $V = \PP^n$;
\item if\/ $i_V = \dim V$ then\/ $V = Q^n \subset \PP^{n+1}$.
\end{itemize}
\end{theorem}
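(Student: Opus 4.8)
The plan is to read everything off the Hilbert polynomial of the polarized variety $(V,H)$ and then to recognize the extremal cases from their numerical invariants. Write $n = \dim V$ and set $P(t) = \chi(V, tH)$; by Hirzebruch--Riemann--Roch this is a polynomial in $t$ of degree $n$ with leading coefficient $d_V/n!$, where $d_V = H^n$. Two inputs control its shape. First, Serre duality combined with $K_V = -i_V H$ yields the functional equation $P(t) = (-1)^n P(-i_V - t)$. Second, since $tH = K_V + (i_V+t)H$ and $(i_V+t)H$ is ample for $t > -i_V$, Kodaira vanishing gives $H^j(V, tH) = 0$ for all $j > 0$ and all $t > -i_V$; in particular $P(0) = \chi(\CO_V) = h^0(\CO_V) = 1$, while for $t < 0$ the line bundle $tH$ has no sections (its intersection with $H^{n-1}$ is negative), so $P(-k) = 0$ for $1 \le k \le i_V - 1$.

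From this the bound is immediate: the degree-$n$ polynomial $P$ vanishes at the $i_V - 1$ distinct points $-1, \dots, -(i_V-1)$, hence $i_V - 1 \le n$, i.e. $i_V \le n+1$. The same data pin down $P$ in the two extremal cases. If $i_V = n+1$, then $P$ has the $n$ roots $-1, \dots, -n$, so $P(t) = \tfrac{d_V}{n!}\prod_{k=1}^n (t+k)$, and $P(0) = 1$ forces $d_V = 1$; evaluating then gives $h^0(V,H) = P(1) = n+1$. If $i_V = n$, then $P$ has the $n-1$ roots $-1,\dots,-(n-1)$ together with one further root $r$, and imposing the symmetry $P(t) = (-1)^n P(-n-t)$ on the remaining linear factor forces $r = -n/2$; now $P(0)=1$ gives $d_V = 2$ and $h^0(V,H) = P(1) = n+2$. (Here I use Kodaira vanishing once more, in the form $h^0(V,H) = P(1)$, valid because $H = K_V + (i_V+1)H$ with $(i_V+1)H$ ample.)

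The final, genuinely geometric, step is to identify $V$ from these numbers. In both cases the numerics say precisely that the $\Delta$-genus vanishes:
\[
\Delta(V,H) = \dim V + H^n - h^0(V,H) = 0 .
\]
Given this, I would show that $|H|$ is base-point free, so that it defines a morphism $\phi \colon V \to \PP^{N}$ with $N = h^0(V,H)-1$; because $\Pic V = \ZZ$, every curve $C$ satisfies $C\cdot H > 0$ and so is not contracted, whence $\phi$ is finite onto its image and $d_V = \deg\phi \cdot \deg\phi(V)$. For $i_V=n+1$ this forces $\deg\phi = 1$ and a nondegenerate degree-$1$ image, i.e. $V \cong \PP^n$; for $i_V=n$ it forces a nondegenerate degree-$2$ image in $\PP^{n+1}$, i.e. a quadric $Q^n$. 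The main obstacle is exactly the base-point-freeness (and very ampleness) of $|H|$: the clean way to obtain it is the inductive argument underlying Fujita's classification of $\Delta$-genus-zero varieties, restricting to a general $D \in |H|$, where adjunction gives $K_D = -(i_V-1)H|_D$ so that $(D, H|_D)$ is again extremal in dimension $n-1$; descending to the classical cases of curves and surfaces (the line $\PP^1$ and plane conics) provides the base of the induction. For the statement as cited it suffices to invoke Fujita's result directly once $\Delta(V,H)=0$ is established.
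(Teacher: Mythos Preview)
The paper does not supply a proof of this theorem; it is quoted with attribution to Fujita and immediately followed by its corollary for threefolds. So there is no in-paper argument to compare against.

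On its own merits your proposal is sound and is essentially the classical Kobayashi--Ochiai/Fujita argument. The Hilbert-polynomial computation is correct: Kodaira vanishing and the absence of sections of $tH$ for $t<0$ give the $i_V-1$ integer roots of $P$, whence the bound; in the extremal cases the leading coefficient, the value $P(0)=1$, and (for $i_V=n$) the Serre-duality symmetry determine $P$ completely, yielding $d_V=1$, $h^0(H)=n+1$ and $d_V=2$, $h^0(H)=n+2$ respectively, i.e.\ $\Delta(V,H)=0$. Your identification step is also correctly flagged: the only nontrivial point is base-point-freeness of $|H|$, and you rightly defer to Fujita's $\Delta$-genus-zero classification (which in turn runs by the hyperplane-section induction you sketch). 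Two minor remarks: you do not need $\Pic V=\ZZ$ to conclude that the map $\phi$ is finite---ampleness of $H$ already gives $C\cdot H>0$ for every curve; and in the $i_V=n$ case with $n$ even the extra root $r=-n/2$ coincides with one of the known roots, but this is harmless since you are only determining the polynomial, not asserting simplicity of its zeros.
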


In particular, for threefolds we have

\begin{corollary}
If $V$ is a Fano threefold then $i_V \le 4$. Moreover
\begin{itemize}
\item if\/ $i_V = 4$ then\/ $V = \PP^3$;
\item if\/ $i_V = 3$ then\/ $V = Q^3 \subset \PP^{4}$.
\end{itemize}
\end{corollary}

Fano threefolds of index 2 are also known as {\sf del Pezzo threefolds}
(since their hyperplane sections are del Pezzo surfaces).

\begin{theorem}
Let $V$ be a Fano threefold with $\Pic V = \ZZ$ of index $i_V = 2$ and of degree $d_V$.
Then
$$
1 \le d_V \le 5
$$
and for each $1 \le d \le 5$ there exists
a unique deformation class of Fano threefolds $Y_d$
with $\Pic Y_d = \ZZ$ of index $2$ and of degree $d$.
They have the following explicit description:
\begin{itemize}
\item $Y_5 = \Gr(2,5) \cap \PP^6 \subset \PP^9$ is a linear section of codimension $3$ of the Grassmannian $\Gr(2,5)$ in the Pl\"ucker embedding;
\item $Y_4 = Q \cap Q' \subset \PP^5$ is an intersection of two $4$-dimensional quadrics;
\item $Y_3 \subset \PP^4$ is a cubic hypersurface;
\item $Y_2 \to \PP^3$ is a double covering ramified in a quartic;
\item $Y_1$ is a hypersurface of degree $6$ in the weighted projective space $\PP(3,2,1,1,1)$.
\end{itemize}
\end{theorem}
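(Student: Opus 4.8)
The plan is to work directly with the ample generator $H$ of $\Pic V = \ZZ$ and to bootstrap the threefold classification from the theory of del Pezzo surfaces and of linear systems of small codimension. Writing $d := d_V$, I would first take a general member $S \in |H|$: since $-K_V = 2H$, adjunction gives $K_S = (K_V + H)|_S = -H|_S$, so $-K_S = H|_S$ is ample with $(-K_S)^2 = H^3 = d$, and hence $S$ is a del Pezzo surface of degree $d$. In particular $d \ge 1$ because $H$ is ample. The numerical heart of the argument is a cohomology computation. Since $mH - K_V = (m+2)H$ is ample for $m \ge -1$, Kodaira vanishing gives $H^i(V, \CO_V(mH)) = 0$ for $i > 0$ in this range, so $h^0(V, mH) = \chi(V, mH)$; feeding $h^0(S, -K_S) = 1 + d$ into the restriction sequence $0 \to \CO_V \to \CO_V(H) \to \CO_S(-K_S) \to 0$ and using $H^{>0}(V, \CO_V) = 0$ yields $h^0(V, H) = d + 2$. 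Equivalently, the Fujita invariant $\Delta(V, H) := \dim V + H^3 - h^0(V, H)$ equals $1$, placing $V$ among the del Pezzo varieties.

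Next I would separate the cases in which $H$ is very ample from those in which it is not. For $d \ge 3$ the restriction $-K_S = H|_S$ is already very ample on the del Pezzo surface $S$, and I would propagate very-ampleness and projective normality up to $V$ through the restriction sequences, embedding $V$ as a nondegenerate threefold of degree $d$ in $\PP^{d+1}$ --- that is, of degree one more than the minimal possible. The identification is then forced by projective geometry: $d = 3$ gives a cubic hypersurface in $\PP^4$; $d = 4$ gives the base locus of the pencil of quadrics through $V$, i.e. an intersection of two quadrics in $\PP^5$; and $d = 5$ gives a codimension-$3$ linear section of the Grassmannian $\Gr(2,5) \subset \PP^9$ in the Pl\"ucker embedding. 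The upper bound $d \le 5$ is exactly where the hypothesis $\Pic V = \ZZ$ is used: del Pezzo threefolds of index $2$ and degree $6$ or $7$ do exist (for instance $\PP^1 \times \PP^1 \times \PP^1$, the flag threefold $\Fl(1,2;3)$, and the blow-up of $\PP^3$ at a point), but each of them carries an extremal contraction other than the contraction to a point, so each has Picard number $\ge 2$. I would therefore bound $d \le 7$ and show that any index-$2$ del Pezzo threefold with $d \ge 6$ admits such an extra contraction, which is incompatible with $\Pic V = \ZZ$.

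For the two low degrees the linear system $|H|$ degenerates and must be handled separately. When $d = 2$ the system $|H|$ is base-point-free but not very ample, and the induced finite morphism $V \to \PP^3$ has degree $2$; reading off its branch divisor from $-K_V = 2H$ exhibits $V$ as the double cover of $\PP^3$ branched in a quartic. When $d = 1$ the system $|H|$ has a single base point, so I would pass to $|2H| = |-K_V|$, which maps $V$ two-to-one onto the cone over the Veronese surface; bookkeeping of the weights in the resulting graded ring then realizes $V$ as a sextic hypersurface in $\PP(3,2,1,1,1)$. In each of the five cases existence is furnished by the explicit model, and uniqueness of the deformation class follows because the relevant parameter space (smooth cubics, pencils of quadrics, quartic branch divisors, weighted sextics, and the rigid quintic) is irreducible.

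The main obstacle is twofold. First, the sharp degree bound $d \le 5$ is the substantive point: controlling all extremal contractions of a hypothetical high-degree index-$2$ del Pezzo threefold, and thereby forcing $\rho \ge 2$, is a genuinely global statement that lies at the core of the Iskovskikh--Fujita theory rather than being a formal consequence of the surface section. Second, the borderline identifications are delicate: recognizing the degree-$5$ threefold specifically as a linear section of $\Gr(2,5)$ --- rather than merely as some degree-$5$, $\Delta$-genus-$1$ threefold --- requires exhibiting the distinguished rank-$2$ bundle whose global sections induce the Pl\"ucker map, while the $d = 1$ case demands careful analysis of the base locus of $|H|$ and of the weighted homogeneous coordinate ring. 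By contrast, very-ampleness and projective normality for $d \ge 3$ should follow routinely from the surface case.
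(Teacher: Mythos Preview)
The paper does not prove this theorem at all: Section~2 is a survey of the classical classification, and this statement is simply recalled as background from the literature (the reference given for the whole section is~\cite{IP}). There is therefore nothing in the paper to compare your argument against.

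That said, your sketch follows the standard Iskovskikh--Fujita line and is broadly reasonable as an outline. A few comments on where it is honest and where it is thin. You correctly identify that the substantive content lies in the bound $d \le 5$ under $\Pic V = \ZZ$, and you are right that this is not a formal consequence of the surface section; your proposed mechanism (exhibiting extra extremal contractions for $d \ge 6$) is morally what happens, but in practice the Fujita approach bounds $d$ via the classification of del Pezzo varieties of $\Delta$-genus~$1$ in all dimensions, rather than by a threefold-specific Mori-theoretic argument. For the identification at $d = 5$ your acknowledged obstacle is real: producing the rank-$2$ bundle that embeds $V$ into $\Gr(2,5)$ is the nontrivial step, and the usual route is through the structure of varieties of minimal degree plus one (or, equivalently, through Fujita's classification of polarized varieties with $\Delta = 1$). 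Your treatment of $d = 1,2$ is accurate in spirit; the graded-ring bookkeeping for $d = 1$ is genuinely the most delicate part and is where a detailed write-up would spend the most effort.

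In short: the proposal is a credible roadmap to a proof, consonant with the classical arguments in~\cite{Is,IP}, but the paper itself offers no proof to compare it to.
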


Now let $V$ be a Fano threefold of index 1. Its general anticanonical section $S \subset V$
is a K3-surface, which comes with a polarization $H_S = H_{|S}$. It follows that
$$
d_V = H^3 = H_S^2 = 2g_V - 2
$$
for some integer $g_V \ge 2$ which is known as the genus of $V$.

\begin{theorem}
Let $V$ be a Fano threefold with $\Pic V = \ZZ$ of index $i_V = 1$ and of genus $g_V$.
Then
$$
2 \le g_V \le 12,\qquad
g_V \ne 11
$$
and for each $g$ in this range there exists
a unique deformation class of Fano threefolds $X_{2g-2}$
with $\Pic X_{2g-2} = \ZZ$ of index $1$ and of genus $g$.
They have the following explicit description:
\begin{itemize}
\item $X_{22} \subset \PP^{13}$ is the zero locus of a global section of the vector bundle $\Lambda^2\CU^* \oplus \Lambda^2\CU^* \oplus \Lambda^2\CU^*$
on the Grassmannian $\Gr(3,7)$, where $\CU$ denotes the tautological rank $3$ bundle;
\item $X_{18} = \GTGr(2,7) \cap \PP^{11} \subset \PP^{13}$, where $\GTGr(2,7)$ is the minimal compact
homogeneous space for the simple algebraic group of type $\GT$, which can be realized as the zero locus
of a global section of the vector bundle $\CU^\perp(1)$ on the Grassmannian $\Gr(2,7)$;
\item $X_{16} = \PP^{10} \cap \SGr(3,6) \subset \PP^{13}$ is a linear section of codimension $3$
of the symplectic Lagrangian Grassmannian $\SGr(3,6)$ in the Pl\"ucker embedding;
\item $X_{14} = \PP^{9} \cap \Gr(2,6) \subset \PP^{14}$ is a linear section of codimension $5$
of the Grassmannian $\Gr(2,6)$ in the Pl\"ucker embedding;
\item $X_{12} = \PP^{8} \cap \OGr_+(5,10) \subset \PP^{15}$ is a linear section of codimension $7$
of the connected component of the orthogonal Lagrangian Grassmannian $\OGr_+(5,10)$ in the half-spinor embedding;
\item $X_{10} = \PP^7 \cap Q \cap \Gr(2,5) \subset \PP^9$ is a quadric section of a linear section of codimension $2$
of the Grassmannian $\Gr(2,5)$ in the Pl\"ucker embedding; or $X_{10} \to Y_5$ is a twofold covering ramified in a quadric;
\item $X_8 = Q \cap Q' \cap Q'' \subset \PP^6$ is an intersection of three $5$-dimensional quadrics;
\item $X_6 = Q \cap F_3 \subset \PP^5$ is an intersection of a quadric and a cubic;
\item $X_4 \subset \PP^4$ is a quartic; or $X_4 \to Q$ is a double cover of a quadric $Q \subset \PP^4$
ramified in the intersection of $Q$ with a quartic;
\item $X_2 \to \PP^3$ is a double covering ramified in a sextic.
\end{itemize}
\end{theorem}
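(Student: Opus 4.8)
The plan is to organise the argument around the anticanonical map, following the strategy of Iskovskikh and Mukai, treating separately the bound on the genus, the existence of one threefold in each class, and deformation-uniqueness. First I would compute cohomology. Since $-K_V$ is ample, Kodaira vanishing gives $h^i(\CO_V)=0$ for $i>0$, so $\chi(\CO_V)=1$, and Riemann--Roch on the threefold yields
$$
h^0(V,-K_V) = \frac{1}{2}(-K_V)^3 + 3 = g_V+2 .
$$
The essential analytic input, due to Shokurov, is that $-K_V$ is base-point free (and very ample away from the hyperelliptic case); I would cite this rather than reprove it. Thus the anticanonical system defines a morphism $\varphi\colon V\to\PP^{g_V+1}$, and one splits into the \emph{hyperelliptic} case, where $\varphi$ is a double cover onto its image, and the generic case, where $\varphi$ is an embedding with image of degree $2g_V-2$. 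A second indispensable input, again from Shokurov, is that $V$ contains a line $\ell$ (a curve with $H\cdot\ell=1$) and that the Hilbert schemes of lines and conics on $V$ are well behaved.

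The genus bound is the heart of the matter. Here I would invoke the \emph{double projection from a line}: blowing up $\ell$ and analysing the linear system $|H-2\ell|$ on the blow-up produces a birational transformation of $V$ onto a much simpler variety --- $\PP^3$, a quadric, a del Pezzo threefold of index $2$, a conic bundle, or a del Pezzo fibration. Since the index-$2$ threefolds $Y_d$ have already been classified (with $1\le d\le 5$), matching $V$ against the possible targets of the double projection both forces the inequality $2\le g_V\le 12$ and, crucially, excludes $g_V=11$, since no admissible target is compatible with a line on a genus-$11$ threefold. I expect this step --- controlling all degenerations of the double projection and ruling out $g_V=11$ --- to be by far the main obstacle, as it is exactly the part resting on the most delicate birational geometry.

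For existence, I would exhibit the models listed in the statement and verify their invariants. For $g_V\ge 7$ I would use Mukai's vector-bundle method: the existence of a stable bundle of small rank on $V$ produces, through its space of global sections, an embedding of $V$ as a linear or quadric section of a homogeneous space --- the Grassmannians $\Gr(3,7)$, $\SGr(3,6)$, $\Gr(2,6)$, $\Gr(2,5)$, the spinor variety $\OGr_+(5,10)$, or $\GTGr(2,7)$ --- matching the list. For small $g_V$ the models are complete intersections or double covers, for which the index, the degree, and the equality $\Pic V=\ZZ$ follow from adjunction together with a Lefschetz-type theorem. In every case one checks $-K_V=H$ and $H^3=2g_V-2$ by a direct Chern-class computation.

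Finally, for uniqueness up to deformation I would argue that each family is parametrised by an irreducible variety. The homogeneous-space realisations depend only on a choice of linear section, or of a global section of the relevant bundle, up to the action of the (connected, hence irreducible) automorphism group of the ambient space; and the low-genus complete-intersection and double-cover families are visibly irreducible. Combined with the reconstruction of $V$ from the output of the double projection, this shows that any Fano threefold with $\Pic=\ZZ$ of index $1$ and genus $g_V$ lies in the single asserted deformation class $X_{2g_V-2}$.
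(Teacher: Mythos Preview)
Your outline is a fair summary of the classical Iskovskikh--Mukai argument, but there is nothing in the paper to compare it against: the paper does not prove this theorem. It is stated in Section~2 as part of the background (``Let us briefly remind those parts of the classification which are the most important for us''), with the survey~\cite{IP} and the original sources~\cite{Is}, \cite{M}, \cite{MU} given as references. The paper's own contributions begin only in Section~3, where this classification is taken as input.

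So while your sketch is broadly correct as a roadmap to the literature proof --- anticanonical map, Shokurov's base-point-freeness and existence of a line, Iskovskikh's double projection to bound~$g_V$ and exclude~$g_V=11$, Mukai's bundle method for the homogeneous-space models --- it is not what the present paper does. If you are writing up this statement in the same spirit as the paper, the appropriate ``proof'' is simply a citation.
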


We will also need the following result of S.~Mukai.

\begin{theorem}[\cite{M}]\label{muth}
Assume that the genus $g$ of a Fano threefold $X_{2g-2}$ can be represented
as a product $g = r\cdot s$ of two integers. Then on $X_{2g-2}$ there exists
a unique stable vector bundle $\CE_r$ of rank $r$ with $c_1(\CE_r) = -H$ and $c_2(\CE_r) = \frac12 H^2 + (r-s)L$,
where $L$ is the class of a line on $X_{2g-2}$. Moreover, $\CE_r$ is exceptional and $H^\bullet(X,\CE_r) = 0$.
\end{theorem}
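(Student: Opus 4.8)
The plan is to reduce the statement to S.~Mukai's theory of moduli of sheaves on K3 surfaces by restriction to a general anticanonical divisor, and to use the threefold only to transport the resulting rigid bundle back and forth. Let $S\in|H|$ be general; as recalled before the theorem, $S$ is a K3 surface carrying the primitive polarization $H_S = H|_S$ with $H_S^2 = 2g-2$. I would first produce the restriction $E_0 = \CE_r|_S$ on $S$, establish its existence, uniqueness and rigidity there, and only afterwards lift it to $X$ and read off the global properties.

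The heart of the matter is a Chern class computation. Intersecting the $1$-cycle $c_2(\CE_r) = \frac12 H^2 + (r-s)L$ with the divisor $S\in|H|$ and using $H^3 = 2g-2$, $L\cdot H = 1$ gives $c_2(E_0) = (g-1)+(r-s)$, whence $\ch_2(E_0) = \frac12 H_S^2 - c_2(E_0) = s-r$. Thus the Mukai vector of $E_0$ is
$$
v(E_0) = \bigl(r,\,-H_S,\,\ch_2(E_0)+r\bigr) = (r,\,-H_S,\,s),
$$
which is primitive since its middle component is primitive in $\Pic S$, and its Mukai self-pairing is
$$
\langle v,v\rangle = H_S^2 - 2rs = (2g-2)-2rs = -2,
$$
where the last equality uses $g = rs$. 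Hence $E_0$ is a spherical class. By Mukai's theory the moduli space of $H_S$-stable sheaves with Mukai vector $v$ is smooth of dimension $\langle v,v\rangle + 2 = 0$; primitivity makes it a single reduced point, so for general $S$ there is a unique $H_S$-stable sheaf $E_0$ with $v(E_0)=v$, it is simple and rigid (i.e. $\Ext^1_S(E_0,E_0)=0$), and a rigid stable sheaf of positive rank on a K3 surface is locally free.

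The main obstacle is to promote $E_0$ to a bundle $\CE_r$ on $X$ itself. The natural route is to extend $E_0$ over the infinitesimal neighbourhoods of $S$ in $X$ and then algebraize by Grothendieck's existence theorem; the obstruction to passing from the $n$-th to the $(n+1)$-st neighbourhood lies in $\Ext^2_S(E_0,E_0\otimes N^{-n})$ with $N = \CO_S(H_S)$, and by Serre duality on $S$ this equals $\Hom_S(E_0,E_0(nH_S))^*$, which does \emph{not} vanish. I therefore expect this extension to be the genuinely hard point: one must show that the individual obstruction classes vanish, not merely the ambient groups. The cleanest way to bypass this is to invoke the explicit models of the preceding theorem: realizing each $X_{2g-2}$ inside a Grassmannian or homogeneous space, one constructs $\CE_r$ directly as a twist or quotient of the restriction of a tautological bundle, checks its Chern classes against the stated values, and then uses the K3 restriction purely to identify it with $E_0$ and to deduce uniqueness.

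Finally, the properties on $X$ follow by playing the restriction sequence
$$
0 \to \CE_r(-H) \to \CE_r \to E_0 \to 0
$$
against Serre duality on the Fano threefold, where $\omega_X = \CO_X(-H)$ and $c_1(T_X)=H$. Stability of $\CE_r$ descends from stability of $E_0$ on a general hyperplane section via a restriction theorem of Mehta--Ramanathan type. For exceptionality one tensors the sequence by $\CE_r^*$ and runs the long exact $\CEnd$-sequence: Serre duality identifies $\Ext^i_X(\CE_r,\CE_r)$ with $\Ext^{3-i}_X(\CE_r,\CE_r(-H))^*$, simplicity and the slope inequality give $\Hom_X(\CE_r,\CE_r)=\kk$ and $\Hom_X(\CE_r,\CE_r(-H))=0$, and the K3 input $H^\bullet(S,\CEnd E_0)=(\kk,0,\kk)$ forces $\Ext^1_X(\CE_r,\CE_r)\cong\Ext^2_X(\CE_r,\CE_r)$; these last groups are then killed by the rigidity of $E_0$ along the whole family $|H|$ of anticanonical sections, which sweep out $X$. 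The vanishing $H^\bullet(X,\CE_r)=0$ is obtained (for $r\ge 2$) by killing $H^0$ and $H^3$ through stability of $\CE_r$ and of $\CE_r^*(-H)$, the remaining $H^1,H^2$ following from the exceptional-collection structure on the explicit model; for $r=1$ the bundle is simply $\CO_X(-H)$. Uniqueness on $X$ is inherited from uniqueness on $S$ by lifting the isomorphism of restrictions through the same $\CEnd$-sequence.
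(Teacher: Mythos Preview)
The paper does not prove this theorem: it is stated with a citation to Mukai~\cite{M} and used as a black box. There is therefore no proof in the paper to compare your proposal against.

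On the merits of your sketch: the Mukai-vector computation on a general anticanonical K3 section is correct, and the conclusion that $v=(r,-H_S,s)$ is primitive with $\langle v,v\rangle=-2$, hence that $M_S(v)$ is a single reduced point carrying a spherical locally free sheaf, is standard. You are also honest that the extension from $S$ to $X$ is the genuine difficulty and that the obstruction groups $\Ext^2_S(E_0,E_0(-nH_S))$ do not vanish; falling back on the explicit Grassmannian models to construct $\CE_r$ directly is indeed how this is done in practice, though it makes the argument case-by-case rather than uniform.

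There is, however, a real gap in your exceptionality argument. Running $\Hom_X(\CE_r,-)$ on the restriction sequence and using Serre duality $\Ext^i_X(\CE_r,\CE_r(-H))\cong\Ext^{3-i}_X(\CE_r,\CE_r)^*$ together with the spherical input $\Ext^\bullet_S(E_0,E_0)=(\kk,0,\kk)$ yields only
\[
\Ext^1_X(\CE_r,\CE_r)\cong\Ext^2_X(\CE_r,\CE_r)^*,\qquad
\Ext^2_X(\CE_r,\CE_r)\cong\Ext^1_X(\CE_r,\CE_r)^*,
\]
i.e.\ $\dim\Ext^1=\dim\Ext^2$, which is consistent with any common value and does \emph{not} force vanishing. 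Your sentence ``these last groups are then killed by the rigidity of $E_0$ along the whole family $|H|$'' is not an argument: rigidity of each restriction only says that first-order deformations of $\CE_r$ restrict trivially to every $S$, not that there are no such deformations. To close this you must either compute $\Ext^\bullet(\CE_r,\CE_r)$ directly on the explicit model (e.g.\ via Borel--Weil--Bott on the ambient homogeneous space), or supply a genuine global argument---for instance, showing that the relative moduli space over $|H|$ is \'etale over the base and then invoking a nontrivial deformation-theoretic comparison. As written, the exceptionality claim is not established.
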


The definition of exceptional bundles is given below (see Definition~\ref{de}).

\begin{remark}
When applied to the Fano threefolds $X_{2g-2}$ with $g \ge 6$ this Theorem
gives the vector bundles which are the restrictions of the dual tautological
bundles from the corresponding Grassmannians.
\end{remark}

\section{Relation of derived categories}

For an algebraic variety $V$ we denote by $\D^b(V)$ the bounded derived category of coherent sheaves on~$V$.
Recall that $\D^b(V)$ is triangulated. We will always assume that $V$ is smooth and projective.

\begin{definition}[\cite{BK,BO1}]
A semiorthogonal decomposition of a triangulated category $\CT$ is a sequence of
full triangulated subcategories $\CC_1,\dots,\CC_n$ in $\CT$ such that
$\Hom_{\CT}(\CC_i,\CC_j) = 0$ for $i > j$
and for every object $T \in \CT$ there exists a chain of morphisms
$0 = T_n \to T_{n-1} \to \dots \to T_1 \to T_0 = T$ such that
the cone of the morphism $T_k \to T_{k-1}$ is contained in $\CC_k$
for each $k=1,2,\dots,n$.
\end{definition}

We will write $\CT = \langle \CC_1,\CC_2,\dots,\CC_n \rangle$ for a semiorthogonal
decomposition of a triangulated category $\CT$ with components $\CC_1,\CC_2,\dots,\CC_n$.

\begin{definition}[\cite{B}]\label{de}
An object $F \in \CT$ is called {\em exceptional}\/ if $\Hom(F,F)=\kk$
and $\Ext^p(F,F)=0$ for all $p\ne 0$. A collection of exceptional
objects $(F_1,\dots,F_m)$ is called {\em exceptional}\/ if
$\Ext^p(F_l,F_k)=0$ for all $l > k$ and all $p\in\ZZ$.
\end{definition}

\begin{proposition}[\cite{BO1}]
Any exceptional collection $F_1,\dots,F_m$ in $\D^b(V)$
gives a semiorthogonal decomposition
$$
\D^b(V) = \langle \CC , F_1, \dots, F_m \rangle
$$
where $\CC = \langle F_1, \dots, F_m \rangle^\perp = \{F \in \D^b(V)\ |\ \text{$\Ext^\bullet(F_k,F) = 0$ for all $1 \le k \le m$}\}$
and all the other components are the subcategories of $\D^b(V)$ generated by $F_k$ {\rm(}each of these is equivalent to
$\D^b(\kk)$, the derived category of $\kk$-vector spaces{\rm)}.
\end{proposition}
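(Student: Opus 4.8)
The plan is to isolate one statement about a single exceptional object and then bootstrap it to the whole collection. I would begin with the components $\langle F_k\rangle$. The exact functor $\D^b(\kk)\to\D^b(V)$ that sends the one-dimensional space to $F_k$, and a complex $W$ to $W\otimes_\kk F_k$, is fully faithful: on the generators it computes $\Hom(\kk,\kk[p])=\Ext^p(F_k,F_k)$, which is $\kk$ for $p=0$ and $0$ otherwise by exceptionality of $F_k$. Its essential image is the triangulated subcategory generated by $F_k$, so that subcategory is equivalent to $\D^b(\kk)$, as asserted. The semiorthogonality relations $\Hom(\CC_i,\CC_j)=0$ for $i>j$ are then read off directly: between $\langle F_l\rangle$ and $\langle F_k\rangle$ with $l>k$ this is exactly the vanishing $\Ext^\bullet(F_l,F_k)=0$ built into the definition of an exceptional collection, while $\Hom(\langle F_k\rangle,\CC)=0$ holds for every $k$ because $\CC$ is defined as $\{F_1,\dots,F_m\}^\perp$. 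Thus only the existence of the filtration of an arbitrary object remains.

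The heart of the argument is a single lemma: for an exceptional object $E$ and any $T\in\D^b(V)$, the cone $C$ of the evaluation morphism $\ev\colon\RHom(E,T)\otimes E\to T$ lies in $E^\perp=\{F\mid\Ext^\bullet(E,F)=0\}$. Here $\RHom(E,T)$ is a bounded complex of finite-dimensional $\kk$-spaces — this is where smoothness and projectivity of $V$ enter, guaranteeing that $\D^b(V)$ is $\Ext$-finite — so $\RHom(E,T)\otimes E$ is a finite sum of shifts of $E$ and an honest object of $\D^b(V)$. To prove the lemma I apply $\RHom(E,-)$ to the triangle $\RHom(E,T)\otimes E\to T\to C\to$; using $\RHom(E,\RHom(E,T)\otimes E)=\RHom(E,T)\otimes\RHom(E,E)=\RHom(E,T)$ and the fact that the first arrow becomes the identity, I conclude $\RHom(E,C)=0$.

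I would then iterate, peeling off $F_m$ first, then $F_{m-1}$, and so on down to $F_1$. Set $T^{(m)}=T$ and let $T^{(j-1)}$ be the cone of $\ev\colon\RHom(F_j,T^{(j)})\otimes F_j\to T^{(j)}$; the lemma gives $T^{(j-1)}\in F_j^\perp$. The point is that this does not destroy the orthogonalities already obtained: applying $\RHom(F_l,-)$ to the same triangle for any $l>j$ and using $\Ext^\bullet(F_l,F_j)=0$ shows $T^{(j-1)}\in F_l^\perp$ as well. Hence after $m$ steps $T^{(0)}\in\CC$, while each triangle contributes a piece $\RHom(F_j,T^{(j)})\otimes F_j\in\langle F_j\rangle$. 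Splicing the $m$ triangles by the octahedral axiom assembles the fiber of $T\to T^{(0)}$ into an iterated extension with graded pieces in $\langle F_m\rangle,\dots,\langle F_1\rangle$ (bottom to top) and top cone $T^{(0)}\in\CC$; this is precisely the chain $0\to T_m\to\dots\to T_1\to T_0=T$ required by the definition, with components appearing in the order $\CC,\langle F_1\rangle,\dots,\langle F_m\rangle$.

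The main obstacle is the bookkeeping in this last step: the order of peeling must run from the highest index downward, because only the vanishing $\Ext^\bullet(F_l,F_j)=0$ for $l>j$ keeps the partial remainder orthogonal to the already-extracted bundles. This is the one place where the hypothesis that $(F_1,\dots,F_m)$ is an exceptional \emph{collection}, rather than merely a tuple of exceptional objects, is essential; the rest of the proof uses exceptionality of each $F_k$ only through the lemma. Uniqueness of the filtration, if wanted, follows formally from the semiorthogonality, but is not needed for the statement.
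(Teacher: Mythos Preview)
Your argument is correct and is the standard proof of this well-known fact. Note, however, that the paper does not actually supply a proof of this proposition: it is stated with a citation to \cite{BO1} and then used as a black box, so there is nothing in the paper to compare your approach against. Your write-up is exactly the argument one finds in the literature (admissibility of an exceptional object via the evaluation triangle, then descending induction on the index), and the only delicate point --- that the peeling must go from $F_m$ down to $F_1$ so that the already-achieved orthogonalities are preserved --- you have identified and handled correctly.
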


Now let $V$ be a Fano variety of index $i = i_V$.
The following result is well known.

\begin{lemma}
Let $V$ be a Fano variety of index $i = i_V$. Then the collection $\CO_V,\CO_V(H),\dots,\CO_V((i-1)H)$
in $\D^b(V)$ is exceptional.
\end{lemma}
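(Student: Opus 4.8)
The plan is to verify Definition~\ref{de} directly for the collection $(\CO_V,\CO_V(H),\dots,\CO_V((i-1)H))$, which amounts to two computations: first, that each $\CO_V(jH)$ is exceptional, and second, that the required $\Ext$-vanishing holds for pairs in the ``wrong'' order. For the first point, since tensoring by the line bundle $\CO_V(jH)$ is an autoequivalence of $\D^b(V)$, it suffices to treat $\CO_V$ itself; here $\Ext^p(\CO_V,\CO_V) = H^p(V,\CO_V)$, and because $V$ is a Fano variety it is in particular smooth, proper and connected with $H^p(V,\CO_V)=0$ for $p>0$ and $H^0(V,\CO_V)=\kk$. (The higher vanishing is Kodaira--Nakano vanishing applied to the ample class $-K_V$, or equivalently the Kodaira vanishing $H^p(V,\CO_V)=H^p(V,K_V\otimes\CO_V(-K_V))=0$ for $p>0$, using that $-K_V$ is ample.)

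For the exceptional-collection condition, I need $\Ext^p(\CO_V(lH),\CO_V(kH))=0$ for all $l>k$ and all $p\in\ZZ$. Using the autoequivalence $-\otimes\CO_V(-kH)$ this reduces to showing
$$
\Ext^p(\CO_V(mH),\CO_V) = H^p(V,\CO_V(-mH)) = 0 \qquad\text{for all }p\in\ZZ,\ 1\le m\le i-1.
$$
The vanishing for $p<0$ and for $p>\dim V$ is automatic, so the content is the range $0\le p\le\dim V$. The key observation is that $-mH$ with $1\le m\le i-1$ lies strictly between $0$ and the canonical class: writing $K_V=-iH$, we have $\CO_V(-mH)=K_V\otimes\CO_V((i-m)H)$ with $1\le i-m\le i-1$, so $(i-m)H$ is an ample class. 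Kodaira vanishing then gives $H^p(V,\CO_V(-mH))=H^p(V,K_V\otimes\CO_V((i-m)H))=0$ for all $p>0$. For $p=0$ the vanishing $H^0(V,\CO_V(-mH))=0$ follows because $-mH$ is the negative of an ample class (with $m\ge 1$), hence has no nonzero global sections on the connected projective variety $V$.

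The main obstacle, such as it is, is bookkeeping rather than depth: one must be careful that the index bound $1\le m\le i-1$ is exactly what places the twist in the window where Kodaira vanishing applies to both the canonical twist and its Serre dual, and one should note that Serre duality $\Ext^p(\CO_V(mH),\CO_V)\cong\Ext^{n-p}(\CO_V,\CO_V(mH)\otimes K_V)^*$ gives an independent route to half of the vanishings and a useful consistency check. I expect the whole argument to be short; the only genuine input is Kodaira vanishing together with the defining inequality $0<m<i$ for the index, which guarantees that none of the line bundles $\CO_V(-mH)$ is either trivial or equal to a nonnegative twist of $\CO_V$.
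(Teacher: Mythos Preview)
Your proof is correct and follows essentially the same approach as the paper's: both reduce the required $\Ext$-vanishings to cohomology computations $H^p(V,\CO_V(-mH))=0$ for $1\le m\le i-1$ via Kodaira vanishing, and handle exceptionality of each line bundle via $H^p(V,\CO_V)$. Your write-up is slightly more careful in separating the $p=0$ case (no sections of an anti-ample line bundle) from the $p>0$ case, which the paper folds into a single appeal to Kodaira vanishing.
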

\begin{proof}
We note that $H^p(V,\CO_V(-kH)) = 0$ for $1 \le k \le i-1$ and all $p$ by the Kodaira vanishing theorem,
hence $\Ext^\bullet(\CO_V(lH),\CO_V(kH)) = 0$ for $0 \le k < l \le i-1$. Similarly,
$H^p(V,\CO_V) = 0$ for all $p > 0$ by the Kodaira vanishing theorem, while $H^0(V,\CO_V) = \kk$
since $V$ is connected. Therefore all line bundles on $V$ are exceptional.
\end{proof}

\begin{corollary}
For any Fano variety $V$ we have the following semiorthogonal decomposition
$$
\D^b(V) = \langle \CB_V,\CO_V,\CO_V(H),\dots,\CO_V((i-1)H) \rangle,
$$
where $i = i_V$ is the index of $V$ and $\CB_V = \{F \in \D^b(V)\ |\ \text{$H^\bullet(V,F(-kH)) = 0$ for all $0 \le k \le i-1$}\}$.
\end{corollary}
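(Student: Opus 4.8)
The plan is to deduce this corollary directly from the preceding proposition applied to the exceptional collection furnished by the lemma. First I would invoke the lemma to conclude that the line bundles $\CO_V, \CO_V(H), \dots, \CO_V((i-1)H)$ form an exceptional collection in $\D^b(V)$, so that the proposition is applicable. Feeding this collection into the proposition yields a semiorthogonal decomposition
$$
\D^b(V) = \langle \CC, \CO_V, \CO_V(H), \dots, \CO_V((i-1)H) \rangle,
$$
where $\CC$ is the left orthogonal to the collection. It then remains only to identify $\CC$ with the category $\CB_V$ defined in the statement.

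The key step is therefore the identification of orthogonals. By the proposition, $\CC$ consists of those $F \in \D^b(V)$ with $\Ext^\bullet(\CO_V(kH), F) = 0$ for all $0 \le k \le i-1$. Using the standard adjunction $\Ext^\bullet(\CO_V(kH), F) \cong \Ext^\bullet(\CO_V, F(-kH)) \cong H^\bullet(V, F(-kH))$, which follows from $\CHom(\CO_V(kH), F) \cong F(-kH)$ together with $\RHom(\CO_V, -) \cong \RGamma(V, -)$, this vanishing condition is exactly $H^\bullet(V, F(-kH)) = 0$ for all $0 \le k \le i-1$. That is precisely the defining condition for $\CB_V$, so $\CC = \CB_V$, completing the proof.

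I do not anticipate any serious obstacle here, since every ingredient is already in place: the exceptionality of the line bundles is the content of the lemma, and the decomposition into orthogonal plus the exceptional objects is the content of the proposition. The only point requiring a line of care is the reindexing in the orthogonality condition, where one must track that the proposition's condition $\Ext^\bullet(\CO_V(kH), F) = 0$ translates, via twisting by $\CO_V(-kH)$ and the projection formula, into the cohomological vanishing $H^\bullet(V, F(-kH)) = 0$ used to define $\CB_V$. This is a routine manipulation, so the corollary follows immediately.
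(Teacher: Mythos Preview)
Your proposal is correct and matches the paper's approach exactly: the paper states this result as an immediate corollary of the preceding lemma and proposition without giving any further proof, and your write-up simply spells out the obvious details (apply the proposition to the exceptional collection from the lemma, then rewrite $\Ext^\bullet(\CO_V(kH),F)$ as $H^\bullet(V,F(-kH))$). There is nothing to add.
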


In particular, for Fano threefolds $Y_d$ of index $2$ we obtain a semiorthogonal decomposition
$$
\D^b(Y_d) = \langle \CB_{Y_d},\CO_V,\CO_V(H) \rangle.
$$

For Fano threefolds $X_{2g-2}$ of index $1$ and even genus $g = 2t$ we consider the vector bundle $\CE_2$ of rank~$2$
provided by Theorem~\ref{muth}. Applying this Theorem we deduce the following

\begin{lemma}
Let $X = X_{2g-2}$ be a Fano threefolds of index $1$ and even genus $g = 2t$.
Let $\CE = \CE_2$ be the vector bundle of rank $2$ on $X$ constructed in Theorem~$\ref{muth}$.
Then $(\CE,\CO_X)$ is an exceptional pair on $X$ and we have a semiorthogonal decomposition
$$
\D^b(X_{2g-2}) = \langle \CA_{X_{2g-2}}, \CE , \CO_{X_{2g-2}}\rangle,
$$
where $\CA_{X_{2g-2}} = \{F \in \D^b(X_{2g-2})\ |\ H^\bullet(X_{2g-2},F) = \Ext^\bullet(\CE,F) = 0\}$.
\end{lemma}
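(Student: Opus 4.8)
The plan is to recognize this lemma as a direct application of the Proposition of~\cite{BO1} quoted above, once we verify that $(\CE,\CO_X)$ is an exceptional pair in the sense of Definition~\ref{de}. Almost all of the substantive input is already packaged into Theorem~\ref{muth}, so the argument is essentially a matter of assembling the pieces and translating between $\Ext$-vanishing and sheaf cohomology.

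First I would check that each of $\CE$ and $\CO_X$ is exceptional. For $\CE=\CE_2$ this is asserted outright by Theorem~\ref{muth}. For $\CO_X$ one repeats the computation used in the proof of the earlier Lemma: $\Hom(\CO_X,\CO_X)=H^0(X,\CO_X)=\kk$ because $X$ is connected, and $\Ext^p(\CO_X,\CO_X)=H^p(X,\CO_X)=0$ for $p>0$ by the Kodaira vanishing theorem, since $-K_X$ is ample.

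Next I would verify the single semiorthogonality condition required by Definition~\ref{de} for the ordered pair $(F_1,F_2)=(\CE,\CO_X)$, namely the vanishing of $\Ext^p(F_2,F_1)=\Ext^p(\CO_X,\CE)$ for all $p\in\ZZ$. Since $\Ext^\bullet(\CO_X,\CE)=H^\bullet(X,\CE)$, this is exactly the cohomology vanishing $H^\bullet(X,\CE)=0$ guaranteed by Theorem~\ref{muth}. With this in hand, $(\CE,\CO_X)$ is an exceptional pair.

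Finally, applying the Proposition to the exceptional collection $F_1=\CE$, $F_2=\CO_X$ yields a decomposition $\D^b(X)=\langle\CC,\CE,\CO_X\rangle$ with $\CC=\{F\mid \Ext^\bullet(\CE,F)=\Ext^\bullet(\CO_X,F)=0\}$. Rewriting $\Ext^\bullet(\CO_X,F)=H^\bullet(X,F)$ identifies $\CC$ with $\CA_{X_{2g-2}}$ as defined in the statement, which completes the argument. The only point demanding any care is the bookkeeping about the direction of the orthogonality and the ordering convention in Definition~\ref{de}; there is no genuine obstacle here, precisely because the hard content --- the existence, stability, exceptionality, and cohomology vanishing of the rank-two Mukai bundle --- is supplied wholesale by Theorem~\ref{muth}.
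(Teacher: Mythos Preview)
Your argument is correct and matches the paper's approach: the paper simply states that the lemma follows by ``applying this Theorem'' (i.e.\ Theorem~\ref{muth}) together with the Proposition of~\cite{BO1}, and you have spelled out exactly those details, including the exceptionality of $\CO_X$ via Kodaira vanishing and the identification $\Ext^\bullet(\CO_X,-)=H^\bullet(X,-)$.
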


Now we can formulate the Conjecture.

\begin{conjecture}\label{mainc}
Let $\CMF^i_d$ be the moduli spaces of Fano threefolds of index $i$ and degree $d$.
Then there is a correspondence $Z_d \subset \CMF^2_d \times \CMF^1_{4d+2}$ which is dominant
over each factor and such that for any point $(Y_d,X_{4d+2}) \in Z_d$ there is an equivalence of categories
$$
\CA_{X_{4d+2}} \cong \CB_{Y_d}.
$$
\end{conjecture}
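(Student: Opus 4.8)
The plan is to prove the equivalence $\CA_{X_{4d+2}} \cong \CB_{Y_d}$ by exhibiting an explicit Fourier--Mukai functor and then to analyze how it varies in families so as to obtain the dominant correspondence $Z_d$. As a first reduction I would concentrate on the cases $d=3,4,5$, where both threefolds admit concrete models: writing $g = 2d+2$ (so that $2g-2 = 4d+2$), the index-$1$ partners are exactly $X_{14}, X_{18}, X_{22}$, realized as sections of homogeneous spaces ($\Gr(2,6)$, $\GTGr(2,7)$, and the $\Gr(3,7)$-model), while the $Y_d$ are the classical del Pezzo threefolds (the cubic, the intersection of two quadrics, and the section of $\Gr(2,5)$). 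Before anything else I would verify the necessary numerical compatibility: compute the Hochschild homology and the Euler pairing on $\CA_{X_{4d+2}}$ and on $\CB_{Y_d}$ and check that they agree. This both guards against an impossible matching and strongly suggests that the common target is a noncommutative deformation of a surface of ``K3 type''.

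The central geometric idea would be to realize each nontrivial component as modules over a sheaf of even Clifford algebras. On a del Pezzo threefold $Y_d$ one projects from a line $L \subset Y_d$; after resolving, the resulting map is a conic (or quadric) bundle over $\PP^2$, and by the standard relation between quadric fibrations and Clifford algebras one identifies $\CB_{Y_d}$ with the derived category of coherent modules over the sheaf $\Clev$ of even parts of the associated Clifford algebra. I would then carry out the analogous construction on $X_{4d+2}$: using the rank-$2$ bundle $\CE = \CE_2$ of Theorem~\ref{muth} to perform a projection or mutation, one again obtains a quadric fibration whose even Clifford data $\Clev'$ describes $\CA_{X_{4d+2}}$. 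The equivalence then reduces to matching the two families of quadrics, that is, to an isomorphism of bases carrying $\Clev$ to $\Clev'$; Homological Projective Duality for the relevant Grassmannian or quadric is the conceptual reason one should expect these two Clifford data to be identified on a common base.

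With the matching in hand I would assemble the comparison into a single Fourier--Mukai kernel on $Y_d \times X_{4d+2}$, built from the correspondence coming from the common Clifford base, and prove that the induced functor restricts to an equivalence $\CB_{Y_d} \exto{\sim} \CA_{X_{4d+2}}$. Full faithfulness I would establish via Bridgeland's criterion, checking the $\Ext$-orthogonality and the point-like normalization on a spanning class of objects; essential surjectivity follows by showing the image contains a set of generators and is closed under the relevant operations. That both categories are indecomposable ``surface-like'' pieces makes this last step manageable once full faithfulness is known.

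The main obstacle, and the reason the statement remains a conjecture, is twofold. First, the Clifford-algebra identification is delicate precisely when the quadric fibration degenerates badly, which is what blocks a uniform treatment of $d=1,2$; controlling the singular fibers there would demand substantially more work. Second, and harder, is the global claim: to produce $Z_d$ dominant over both $\CMF^2_d$ and $\CMF^1_{4d+2}$ one must run the construction in families and compute the differential of the resulting map of moduli spaces, proving it generically dominant. This is a genuine deformation-theoretic problem --- one would compare deformations of $Y_d$ against deformations of the pair $(X_{4d+2},\CE)$ through the induced deformations of the common Clifford datum --- and I expect this dimension count, rather than the pointwise equivalence, to be the true crux.
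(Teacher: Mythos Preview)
The statement you are attempting to prove is a \emph{conjecture}; the paper does not prove it in full.  What the paper actually establishes is two pieces of evidence: Proposition~\ref{rr}, verifying that the numerical Grothendieck groups $K_0(\CA_{X_{4d+2}})_{num}$ and $K_0(\CB_{Y_d})_{num}$ are isometric for all $1\le d\le 5$, and Theorem~\ref{maint}, which proves the equivalence $\CA_{X_{4d+2}}\cong\CB_{Y_d}$ for $d=3,4,5$ only.  Your numerical-compatibility step matches Proposition~\ref{rr} in spirit, and your acknowledgment that $d=1,2$ are open is correct.

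For the three cases the paper does handle, its method is quite different from your uniform Clifford-algebra scheme.  For $d=5$ both categories are identified explicitly with $\D^b(\Qu_3)$, the derived category of the three-arrow Kronecker quiver, using full exceptional collections on $X_{22}$ and $Y_5$; no quadric fibration enters.  For $d=4$ both categories are identified with $\D^b(C)$ for a smooth genus-$2$ curve $C$, the curve arising from the pencil of quadrics through $Y_4$ on one side and from the pencil of hyperplane sections of $\GTGr(2,7)$ through $X_{18}$ on the other.  Only for $d=3$ does the paper use something like your approach, namely Homological Projective Duality for $\Gr(2,6)$ and the Pfaffian cubic, and even there the argument is not phrased via a conic-bundle Clifford algebra on either side.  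So your plan is a genuinely different route; it is a reasonable research proposal, but it is not what the paper does, and whether the Clifford data on the $X$-side and the $Y$-side can be matched uniformly is itself an open problem rather than a reduction.

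One concrete error: your claim that the common component is ``a noncommutative deformation of a surface of K3 type'' is wrong in this setting.  For $d=5$ the component is $\D^b(\Qu_3)$, and for $d=4$ it is the derived category of a genus-$2$ curve; neither has Serre functor $[2]$.  The K3-type behaviour you may be thinking of occurs for cubic \emph{fourfolds}, not for the threefolds appearing here.
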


The main support for this conjecture is provided by the following results.

\begin{theorem}\label{maint}
Conjecture~$\ref{mainc}$ is true for $d = 3,4,5$.
\end{theorem}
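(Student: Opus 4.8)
The plan is to treat the three values $d=3,4,5$ separately and, in each case, to exhibit a single triangulated category $\CC$ occurring simultaneously as the nontrivial component $\CB_{Y_d}$ of $\D^b(Y_d)$ and as the nontrivial component $\CA_{X_{4d+2}}$ of $\D^b(X_{4d+2})$; the correspondence $Z_d\subset\CMF^2_d\times\CMF^1_{4d+2}$ is then the locus of pairs for which these two identifications are compatible, and its dominance over each factor is read off from a dimension count. The principal engine is Homological Projective Duality: if $M\to\PP(W)$ is a Lefschetz variety with respect to $\CO_M(1)$ and $M^\vee\to\PP(W^\vee)$ is its HP-dual, then for a subspace $L\subset W^\vee$ the orthogonal linear sections $M\cap\PP(L^\perp)$ and $M^\vee\cap\PP(L)$ carry semiorthogonal decompositions sharing one and the same primitive component $\CC_L$, the remaining pieces being the residual parts of the two dual Lefschetz collections. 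The task in each case is thus (i) to find the HP-dual pair whose orthogonal sections are precisely $Y_d$ and $X_{4d+2}$, and (ii) to check that the residual Lefschetz pieces are exactly the exceptional objects $\CO_{Y_d},\CO_{Y_d}(H)$ on one side and $\CE,\CO_{X_{4d+2}}$ on the other, so that $\CC_L=\CB_{Y_d}=\CA_{X_{4d+2}}$.

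For $d=3$ I would take $M=\Gr(2,6)$ in its Pl\"ucker embedding in $\PP(\Lambda^2\kk^6)=\PP^{14}$. Its homological projective dual is governed by the Pfaffian cubic $\Pf\subset\PP((\Lambda^2\kk^6)^\vee)=\PP^{14}$; since $\Pf$ is singular one must work with a categorical resolution carrying the dual Lefschetz decomposition. A generic $\PP^9=\PP(L^\perp)$ meets $\Gr(2,6)$ in the genus-$8$ threefold $X_{14}$, while the orthogonal $\PP^4=\PP(L)$ meets $\Pf$ in a cubic threefold $Y_3$; HPD then yields $\CA_{X_{14}}\cong\CC_L\cong\CB_{Y_3}$ after matching the Lefschetz bases. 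Here $\CC_L$ genuinely varies (it is the nontrivial component of the cubic threefold's derived category, whose moduli reflect the $5$-dimensional intermediate Jacobian), so $Z_3$ is an honest correspondence rather than a product; its dominance over both factors follows from the classical Fano--Iskovskikh relation between $X_{14}$ and cubic threefolds, which has $5$-dimensional fibres over the cubic side, together with a dimension count.

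For $d=4$ and $d=5$ the primitive component is small and can be described intrinsically, which lets me match the two sides through an intermediate object rather than a single duality. When $d=4$, $Y_4=Q\cap Q'$ is an intersection of two quadrics in $\PP^5$, and by the Bondal--Orlov quadric-bundle analysis $\CB_{Y_4}\cong\D^b(C)$, where $C$ is the genus-$2$ curve double covering $\PP^1$ along the degeneration locus of the pencil $\langle Q,Q'\rangle$. Using the $G_2$-geometry of $X_{18}=\GTGr(2,7)\cap\PP^{11}$ one shows $\CA_{X_{18}}$ is likewise equivalent to $\D^b(C')$ for a genus-$2$ curve $C'$, and $Z_4$ is the locus where $C\cong C'$; dominance holds because both families of curves dominate $\CM_2$. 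When $d=5$, $Y_5$ is rigid and $\D^b(Y_5)$ carries Orlov's full exceptional collection of length $4$; deleting $\CO,\CO(H)$ and mutating leaves an exceptional pair with a three-dimensional space of morphisms, so $\CB_{Y_5}$ is the derived category of representations of the quiver with two vertices and three arrows. Using Mukai's bundle $\CE=\CE_2$ and mutations of the exceptional collection coming from the $\Gr(3,7)$ description of $X_{22}$, one identifies $\CA_{X_{22}}$ with the same quiver category; as both components are rigid, $Z_5$ is the entire product and dominance is automatic.

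The main obstacle is establishing the homological projective dualities themselves. The Grassmannian--Pfaffian duality needed for $d=3$ requires the categorical resolution of the singular Pfaffian cubic and a proof that its Lefschetz decomposition is the expected one; the duality underlying $d=4$ rests on a delicate analysis of the coadjoint $G_2$-variety and its quadric-bundle dual, which is the least standard input. A secondary, more technical difficulty is the genericity and smoothness bookkeeping: one must verify that the residual Lefschetz pieces match the prescribed exceptional objects with the correct twists, and that the admissible linear sections sweep out a locus large enough for $Z_d$ to be \emph{dominant}, not merely nonempty, over each moduli space.
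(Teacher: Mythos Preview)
Your proposal is correct and follows essentially the same route as the paper: $d=5$ via the identification of both $\CB_{Y_5}$ and $\CA_{X_{22}}$ with $\D^b(\Qu_3)$ using the full exceptional collections of Orlov and of \cite{K1,K2}; $d=4$ via the identification of both components with $\D^b(C)$ for a genus-$2$ curve (pencil of quadrics on the $Y_4$ side, the HPD-type analysis of hyperplane sections of $\GTGr(2,7)$ from \cite{K5} on the $X_{18}$ side); and $d=3$ via the Grassmannian--Pfaffian homological projective duality of \cite{K3,K9}. The paper packages these as three separate citations rather than under a unifying HPD banner, and it takes $Z_4$ and $Z_3$ to be graphs of the natural maps $\CMF^1_{18}\to\CM_2\cong\CMF^2_4$ and $\CMF^1_{14}\cong\CMFI^2_3(2)\to\CMF^2_3$ rather than treating dominance as a separate issue, but the substance is identical to what you outline.
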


A proof of Theorem~\ref{maint} will be given in the next section.
It consists of a case-by-case analysis, see Corollaries~\ref{cor5}, \ref{cor4} and~\ref{cor3}.
And now we are going to show that the numerical Grothendieck groups of categories
$\CA_{X_{4d+2}}$ and $\CB_{Y_d}$ are isomorphic.

For a triangulated category $\CT$ we denote by $K_0(\CT)$ its Grothendieck group.
It comes with the bilinear Euler form
$$
\chi([F],[G]) = \sum_p (-1)^p \dim\Ext^p(F,G).
$$
The {\sf numerical Grothendieck group}\/ $K_0(\CT)_{num}$ is defined as the quotient
$K_0(\CT)_{num} := K_0(\CT)/\Ker\chi$.
If $\CT = \D^b(V)$ we write $K_0(V)$ and $K_0(V)_{num}$ instead of $K_0(\D^b(V))$ and $K_0(\D^b(V))_{num}$ for brevity.

\begin{proposition}\label{rr}
For all $1 \le d \le 5$ there is an isomorphism of numerical Grothendieck groups
$$
K_0(\CA_{X_{4d+2}})_{num} \cong K_0(\CB_{Y_d})_{num}
$$
compatible with the Euler bilinear forms.
\end{proposition}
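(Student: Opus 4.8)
The plan is to reduce the statement to a finite piece of linear algebra with the Euler form and then to exhibit an explicit integral change of basis. First I would use that a semiorthogonal decomposition splits the Grothendieck group. The decompositions $\D^b(Y_d) = \langle \CB_{Y_d},\CO_{Y_d},\CO_{Y_d}(H)\rangle$ and $\D^b(X_{4d+2}) = \langle \CA_{X_{4d+2}},\CE,\CO_{X_{4d+2}}\rangle$ give $K_0(Y_d)_{num} = K_0(\CB_{Y_d})_{num}\oplus\ZZ[\CO_{Y_d}]\oplus\ZZ[\CO_{Y_d}(H)]$ and likewise for $X_{4d+2}$, and the semiorthogonality conditions $\Ext^\bullet(\CO_{Y_d}(kH),\CB_{Y_d})=0$ and $\Ext^\bullet(\CO_{X_{4d+2}},\CA_{X_{4d+2}})=\Ext^\bullet(\CE,\CA_{X_{4d+2}})=0$ make the Euler form block-triangular. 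Since the Euler form on $K_0(V)_{num}$ is a perfect pairing (Serre duality) and $K_0(V)_{num}$ has rank $4$ for a Fano threefold with $\Pic V = \ZZ$, each nontrivial component has numerical Grothendieck group a unimodular rank-$2$ lattice, realised as the left-orthogonal $\langle[\CO_{Y_d}],[\CO_{Y_d}(H)]\rangle^\perp$, respectively $\langle[\CE],[\CO_{X_{4d+2}}]\rangle^\perp$, of the spanned rank-$2$ sublattice.

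Next I would make the Euler form explicit via Hirzebruch--Riemann--Roch. Writing $\ch = (r,cH,\ch_2,\ch_3)$ in the basis $1,H,L,[\mathrm{pt}]$ of the numerical Chow ring, and using $c_1(T_V) = i_V H$ together with $H\cdot c_2(T_V) = 24/i_V$ (the latter forced by $\chi(\CO_V)=1$), the formula $\chi(F,G) = \int_V \ch(F)^\vee\,\ch(G)\,\mathrm{td}(V)$ becomes an explicit bilinear form on $\ZZ^4$ depending only on the degree and the index. I would then record the classes of the exceptional objects: $[\CO]$ and $[\CO(H)]=e^H$ on $Y_d$, and on $X_{4d+2}$ the class of Mukai's rank-$2$ bundle $\CE$, which exists because the genus $g = 2d+2$ is even, with $c_1(\CE)=-H$ and $c_2(\CE)=(d+2)L$ by Theorem~\ref{muth}. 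Imposing the two orthogonality conditions in each case determines two of the four coordinates as functions of the surviving pair $(r,c)$, and restricting $\chi$ yields the Gram matrices
$$
M_{\CB_{Y_d}} = \begin{pmatrix} -1 & -1 \\ 1-d & -d \end{pmatrix},
\qquad
M_{\CA_{X_{4d+2}}} = \begin{pmatrix} -d & -d-1 \\ 1-2d & -2d-1 \end{pmatrix}.
$$

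Finally I would exhibit an explicit isometry. One checks directly that, \emph{independently of} $d$, the matrix
$$
T = \begin{pmatrix} 0 & -1 \\ 1 & 2 \end{pmatrix}\in\SL_2(\ZZ),
\qquad
T^{t}\,M_{\CB_{Y_d}}\,T = M_{\CA_{X_{4d+2}}},
$$
which produces the required isomorphism $K_0(\CA_{X_{4d+2}})_{num}\cong K_0(\CB_{Y_d})_{num}$ compatible with the Euler forms. The main obstacle I anticipate is not the algebra of $T$ but the integral bookkeeping: one must verify that the surviving coordinates $(r,c)$ genuinely furnish a $\ZZ$-basis of each orthogonal complement rather than a finite-index sublattice. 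This follows from unimodularity, since the computed Gram determinants $\det M_{\CB_{Y_d}} = \det M_{\CA_{X_{4d+2}}} = 1$ force the coordinate projection to be an isomorphism of lattices; the same unimodularity is what allows the naive Chern-character computation to compute the integral form. The conceptual point the computation makes transparent is that the relation $\deg X = 4\,\deg Y + 2$ is precisely what renders the two forms $\SL_2(\ZZ)$-equivalent, so that this isometry is the shadow in $K$-theory of the conjectural equivalence $\CA_{X_{4d+2}}\cong\CB_{Y_d}$.
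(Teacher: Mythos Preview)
Your proposal is correct and follows essentially the same route as the paper: both compute the Euler form on $K_0(V)_{num}\cong\ZZ^4$ via Riemann--Roch, pass to the orthogonal complement of the exceptional pair, obtain the identical Gram matrices $\chi_\CB=\left(\begin{smallmatrix}-1&-1\\1-d&-d\end{smallmatrix}\right)$ and $\chi_\CA=\left(\begin{smallmatrix}-d&-d-1\\1-2d&-2d-1\end{smallmatrix}\right)$, and exhibit the explicit $\SL_2(\ZZ)$ isometry (your $T$ is the negative of the paper's $A$, which is harmless). The only cosmetic difference is that the paper writes down explicit basis vectors in the numerical Chow ring using its Appendix on AK-compatibility, whereas you parametrize by $(r,c)$ and dispose of the integrality issue via the unimodularity of the resulting Gram matrices; both arguments are valid.
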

\begin{proof}
A computation based on the Riemann--Roch Theorem.

Note that by~\ref{k0f3} the Chern character map $\ch:K_0(X_{2g-2})_{num} \to \oplus_{p=0}^3 H^{2p}(X_{2g-2},\QQ)$
identifies the numerical Grothendieck group $K_0(X_{2g-2})_{num}$ with the lattice generated by elements
$$
\ch(\CO_{X_{2g-2}}) = 1,\quad
\ch(\CO_H) = H - (g-1)L + \frac{g-1}3 P,\quad
\ch(\CO_L) = L + \frac12 P,\quad
\ch(\CO_P) = P,
$$
where $P$ is the class of a point, and by Riemann--Roch the Euler form can be expressed as
$$
\chi(u,v) = \chi_0(u^*\cap v),
$$
where $u \mapsto u^*$ is the involution of $\oplus_{p=0}^3 H^{2p}(X_{2g-2},\QQ)$ given by $(-1)^p$-multiplication
on $H^{2p}(X_{2g-2},\QQ)$, and $\chi_0$ is given by the formula
$$
\chi_0(x + yH + zL + wP) = x + \frac{g+11}6 y + \frac12 z + w.
$$
On the other hand, we have $\ch(\CO_{X_{2g-2}}) = 1$, and using Theorem~\ref{muth} it is easy to compute
$$
\ch(\CE) = 2 - H + \frac{g-4}2 L - \frac{g-10}{12} P.
$$
It follows that
$$
K_0(\CA_{X_{2g-2}})_{num} =
\left\langle 1, 2 - H + \frac{g-4}2 L - \frac{g-10}{12} P \right\rangle^\perp\! =
\left\langle 1 - \frac g2 L + \frac{g-4}4P, H - \frac{3g - 6}2 L + \frac{7g - 40}{12} P \right\rangle.
$$
Computing the form $\chi$ on the base vectors we conclude that
$$
K_0(\CA_{X_{2g-2}})_{num} \cong \ZZ^2,
\qquad
\chi_\CA = \left(\begin{matrix} 1 - g/2 & - g/2 \\ 3-g & 1-g \end{matrix}\right).
$$
 as a lattice with a bilinear form.

Similarly, by~\ref{k0f3} the Chern character map $\ch:K_0(Y_d)_{num} \to \oplus_{p=0}^3 H^{2p}(Y_d,\QQ)$
identifies $K_0(Y_d)_{num}$ with the lattice generated by elements
$$
\ch(\CO_{Y_d}) = 1,\quad
\ch(\CO_H) = H - \frac d2 L + \frac d6 P,\quad
\ch(\CO_L) = L,\quad
\ch(\CO_P) = P,
$$
and it is easy to see that $\chi_0$ is given by the formula
$$
\chi_0(x + yH + zL + wP) = x + \frac{d+3}3 y + z + w.
$$
On the other hand, we have $\ch(\CO_{Y_d}) = 1$, $\ch(\CO_{Y_d}(H)) = 1 + H + \frac d2 L + \frac d6 P$, so
it follows that
$$
K_0(\CB_{Y_d})_{num} =
\left\langle 1, 1 + H + \frac d2 L + \frac d6 P \right\rangle^\perp =
\left\langle 1 - L, H - \frac{d}2 L + \frac{d-6}{6} P \right\rangle.
$$
Computing the form $\chi$ on the base vectors we conclude that
$$
K_0(\CB_{Y_d})_{num} \cong \ZZ^2,
\qquad
\chi_\CB = \left(\begin{matrix} -1 & - 1 \\ 1-d & -d \end{matrix}\right).
$$
 as a lattice with the bilinear form.

A direct check shows that for the map $\ZZ^2 \to \ZZ^2$ given by the matrix
$$
A = \left(\begin{matrix} 0 & 1 \\ -1 & -2 \end{matrix}\right)
$$
we have
$$
A^T\cdot\chi_\CB\cdot A = \left(\begin{matrix} -d & -1-d \\ 1-2d & -1-2d \end{matrix}\right)
$$
which for $g = 2d+2$ coincides with the matrix of $\chi_\CA$. Thus the map $A$
gives a required isomorphism $K_0(\CB_{Y_d})_{num} \cong K_0(\CA_{X_{4d+2}})_{num}$
compatible with the Euler forms.
\end{proof}

\section{Cases $d \ge 3$}

In this section we prove Theorem~\ref{maint} by a case-by-case analysis.

\subsection{The case $d=5$}

Recall that the del Pezzo threefold $Y_5$ of degree $5$ is rigid, so that the moduli space $\CMF^2_5$ is a point.
On the contrary, Fano threefolds $X_{22}$ of genus $12$ have a 6-dimensional moduli space $\CMF^1_{22}$.
We will show that the correspondence $Z_5 \subset \CMF^2_5 \times\CMF^1_{22}$ is the whole product.
In the other words, we are going to show that for any $X_{22}$ and for the unique $Y_5$
there is an equivalence $\CA_{X_{22}} \cong \CB_{Y_5}$. For this we give an explicit
description of both categories in question.

For the $X_{22}$ the description is based on the following result.
Let $\CE_2$ and $\CE_3$ be the vector bundles of rank $2$ and $3$ on $X = X_{22}$
provided by Theorem~\ref{muth} for the factorizations $g_X = 12 = 2\cdot 6 = 3\cdot 4$.
Let $W = H^0(X,\CE_3^*)^* \cong \kk^7$, so that $X \subset \Gr(3,W)$.

\begin{theorem}[\cite{K1,K2}]
The bundles $W/\CE_3\otimes\CO_X(-1),\CE_3^*\otimes\CO_X(-1),\CE_2,\CO_X$ form a full exceptional collection, so that
$$
\D^b(X_{22}) = \langle W/\CE_3\otimes\CO_X(-1),\CE_3^*\otimes\CO_X(-1),\CE_2,\CO_X \rangle.
$$
Moreover, $\Hom(W/\CE_3\otimes\CO_X(-1),\CE_3^*\otimes\CO_X(-1)) = \kk^3$, $\Ext^{\ne 0}(W/\CE_3\otimes\CO_X(-1),\CE_3^*\otimes\CO_X(-1)) = 0$, so that
$$
\CA_{X_{22}} \cong \D^b(\Qu_3),
$$
where $\Qu_3 = \left(\xymatrix@1{ \bullet \ar[r] \ar@<1ex>[r] \ar@<-1ex>[r] & \bullet }\right)$ is the Kronecker quiver with $3$ arrows.
\end{theorem}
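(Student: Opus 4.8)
The plan is to exploit the Mukai embedding $X = X_{22} \subset \Gr(3,W)$ cut out by the rank $3$ bundle $\CE_3^*$. Under this embedding $\CE_3 = \CU|_X$ is the restriction of the tautological subbundle, hence $\CE_3^* = \CU^*|_X$ and $W/\CE_3 = \CQ|_X$ is the restriction of the tautological quotient bundle. By the classification theorem $X$ is the zero locus of a regular section of the rank $9$ bundle $\CF = (\Lambda^2\CU^*)^{\oplus 3}$ on $\Gr(3,W)$, so there is a Koszul resolution
$$
0 \to \Lambda^9\CF^* \to \dots \to \CF^* \to \CO_{\Gr} \to \CO_X \to 0.
$$
Tensoring this complex with a homogeneous bundle $\CG$ and passing to hypercohomology reduces every cohomology computation on $X$ to the computation of $H^\bullet(\Gr, \Sigma^\lambda\CU^* \otimes \Lambda^k\CF^*)$, all of which are controlled by Bott's theorem. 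This is the engine that drives the whole argument.

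First I would check that the four bundles form an exceptional collection. Exceptionality of $\CE_2$ and $\CO_X$ is part of Theorem~\ref{muth}; exceptionality of $\CE_3^*\otimes\CO_X(-1)$ and of $W/\CE_3\otimes\CO_X(-1)$, and all the semiorthogonality vanishings $\Ext^\bullet(F_l,F_k)=0$ for $l>k$, follow from the Koszul/Bott mechanism above, using in addition the vanishing $H^\bullet(X,\CE_2)=0$ from Theorem~\ref{muth} for the terms that pair $\CE_2$ against $\CO_X$ and against the two Grassmannian bundles. The one Hom space that survives is
$$
\Ext^\bullet(W/\CE_3\otimes\CO_X(-1),\ \CE_3^*\otimes\CO_X(-1)) = \Hom(\CQ|_X,\CU^*|_X) = H^\bullet\bigl(X,(\CQ^*\otimes\CU^*)|_X\bigr),
$$
which the same method evaluates to $\kk^3$ concentrated in degree $0$, reflecting the three forms in $\Lambda^2 W^*$ that cut out $X$.

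The hard part will be fullness, i.e. that the collection generates $\D^b(X)$. The plan is to begin with Kapranov's full exceptional collection $\{\Sigma^\lambda\CU^*\}$ on $\Gr(3,W)$ (indexed by partitions inside the $3\times 4$ box), restrict it to $X$ via the Koszul resolution to conclude that the restricted bundles generate $\D^b(X)$, and then prune this large and highly redundant generating set down to the four claimed objects by a sequence of mutations, discarding each bundle that the Koszul complex expresses through the others. Keeping track of which $\Sigma^\lambda\CU^*|_X$ survive and reducing them to the stated four bundles is the genuinely technical step, and it is exactly the content of the cited references \cite{K1,K2}; I expect this to be the main obstacle, since exceptionality and semiorthogonality are essentially bookkeeping over Bott's theorem whereas generation requires the full geometry of the embedding.

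Finally, the Kronecker identification is formal once fullness is in hand. By its definition $\CA_{X_{22}}$ is the right orthogonal of $\langle\CE_2,\CO_X\rangle$, so the full exceptional collection exhibits it as the subcategory $\langle W/\CE_3\otimes\CO_X(-1),\ \CE_3^*\otimes\CO_X(-1)\rangle$ generated by the remaining exceptional pair. Writing $A = W/\CE_3\otimes\CO_X(-1)$ and $B = \CE_3^*\otimes\CO_X(-1)$, the object $T = A\oplus B$ is a tilting generator of $\CA_{X_{22}}$: it has no higher self-extensions, and its endomorphism algebra is concentrated in degree $0$ and equals $\kk \oplus \Hom(A,B)\oplus \kk$ with $\Hom(A,B)=\kk^3$ and $\Hom(B,A)=0$, which is precisely the path algebra of the quiver $\Qu_3$. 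The standard relation between exceptional collections and path algebras \cite{B} then yields the equivalence $\CA_{X_{22}}\cong\D^b(\Qu_3)$.
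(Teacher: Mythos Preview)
The paper does not prove this theorem; it is quoted from \cite{K1,K2}. Your outline is in the spirit of those references: the Koszul resolution of $\CO_X$ on $\Gr(3,W)$, Bott vanishing for cohomology of homogeneous bundles, and reduction of Kapranov's restricted collection by mutations. The computation of $\Hom(W/\CE_3(-1),\CE_3^*(-1))\cong\kk^3$ and the final tilting argument identifying $\CA_{X_{22}}$ with $\D^b(\Qu_3)$ are both correct as stated.

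There is, however, a real gap in your treatment of $\CE_2$. The Koszul/Bott engine only computes cohomology of bundles restricted from $\Gr(3,W)$, and $\CE_2$ is not of this type: it is the rank-$2$ Mukai bundle, with no a priori description on the ambient Grassmannian $\Gr(3,7)$. You invoke $H^\bullet(X,\CE_2)=0$ from Theorem~\ref{muth}, but that only yields $\Ext^\bullet(\CO_X,\CE_2)=0$. The remaining semiorthogonality conditions $\Ext^\bullet(\CE_2,\CE_3^*(-1))=0$ and $\Ext^\bullet(\CE_2,(W/\CE_3)(-1))=0$ amount, via $\CE_2^*\cong\CE_2(1)$, to the vanishing of $H^\bullet(X,\CE_2\otimes\CE_3^*)$ and $H^\bullet(X,\CE_2\otimes(W/\CE_3))$, and nothing you have written computes these. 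In the actual argument one does not verify exceptionality of the given quadruple first and fullness second; rather, one runs the mutation process on the restricted Kapranov collection and \emph{produces} a rank-$2$ bundle in the third slot as an output, whose orthogonality to the others is then automatic. That bundle is identified with $\CE_2$ a posteriori, using the uniqueness clause of Theorem~\ref{muth}. Your outline reverses this logical order and thereby leaves the $\CE_2$ computations hanging.
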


On the other hand, for the $Y_5$ a description of the derived category was given by Orlov.
Let $\CU$ be the restriction to $Y = Y_5$ of the tautological bundle from the Grassmannian $\Gr(2,5)$ to $Y$.
Let $W' = H^0(Y,\CU^*)^* \cong \kk^5$, so that $Y \subset \Gr(2,W')$.

\begin{theorem}[\cite{Or}]
The bundles $W'/\CU\otimes\CO_Y(-1),\CU,\CO_Y,\CO_Y(1)$ form a full exceptional collection, so that
$$
\D^b(Y_5) = \langle W'/\CU\otimes\CO_Y(-1),\CU,\CO_Y,\CO_Y(1) \rangle.
$$
Moreover, $\Hom(W'/\CU\otimes\CO_Y(-1),\CU) = \kk^3$, $\Ext^{\ne 0}(W'/\CU\otimes\CO_Y(-1),\CU) = 0$, so that
$\CB_{Y_5} \cong \D^b(\Qu_3)$.
\end{theorem}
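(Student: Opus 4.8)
The plan is to realise the statement as an instance of the general principle ``restrict a full exceptional collection from the ambient Grassmannian and control the restriction'', using that $Y = Y_5$ is a codimension-$3$ linear section of $G := \Gr(2,W')$, $\dim W' = 5$. Writing $i\colon Y \hookrightarrow G$ for the embedding, the three Pl\"ucker hyperplanes exhibit $Y$ as the zero locus of a regular section of $\CO_G(1)^{\oplus 3}$, so one has the Koszul resolution
$$
0 \to \CO_G(-3) \to \CO_G(-2)^{\oplus 3} \to \CO_G(-1)^{\oplus 3} \to \CO_G \to i_*\CO_Y \to 0 .
$$
First I would prove that $(W'/\CU\otimes\CO_Y(-1),\ \CU,\ \CO_Y,\ \CO_Y(1))$ is an exceptional collection. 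Tensoring this resolution with the relevant bundles reduces every $\Ext^\bullet$-group on $Y$ to the computation of groups $H^\bullet\big(G,\ \Sigma^\alpha\CU\otimes\CO_G(-j)\big)$ for small $j$, all of which are supplied by the Borel--Weil--Bott theorem. The semiorthogonality conditions then come down to the vanishing of $H^\bullet(Y,-)$ for a short explicit list of bundles assembled from $\CU$, $\CU^*$, $W'/\CU$ and the twists $\CO_Y(-1),\CO_Y(-2)$, and the exceptionality of each object is the analogous (easier) bookkeeping; I expect all of this to be routine.

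The substantial step is fullness, and this is where I expect the main obstacle to lie. I would deduce it from the generation lemma: if $E_1,\dots,E_{10}$ is Kapranov's full exceptional collection on $G$, consisting of the Schur functors $\Sigma^\alpha\CU^*$ with $\alpha$ running over the Young diagrams contained in the $2\times 3$ box, then the derived restrictions $Li^*E_j$ generate $\D^b(Y)$. Indeed, for any $F\in\D^b(Y)$ one has $Li^*i_*F\cong F\otimes Li^*i_*\CO_Y$, and along the regular embedding $i$ the object $Li^*i_*\CO_Y$ splits as $\bigoplus_{k}\Lambda^k N_{Y/G}^*[k]$, so $F$ is a direct summand of $Li^*i_*F$; since $i_*F$ lies in $\D^b(G)=\langle E_j\rangle$, applying $Li^*$ places $Li^*i_*F$, and hence its summand $F$, in the thick subcategory generated by the $Li^*E_j$. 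It then remains to check that each restricted generator $\Sigma^\alpha\CU^*|_Y$ already lies in the triangulated subcategory generated by the four objects of the stated collection; granting this, the four objects generate $\D^b(Y)$. I would carry this out by repeatedly using the tautological sequence $0\to\CU\to W'\otimes\CO_Y\to W'/\CU\to 0$, its twists, and the induced filtrations of the $\Sigma^\alpha\CU^*$, together with the Koszul relations induced on $Y$, so as to rewrite every higher Schur functor as an iterated extension of the four objects and their twists by $\CO_Y(\pm 1)$. The bookkeeping needed to carry all ten restrictions into $\langle W'/\CU\otimes\CO_Y(-1),\CU,\CO_Y,\CO_Y(1)\rangle$ is the genuine content; the alternative is simply to invoke Orlov's theorem, whose proof effects exactly this reduction.

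Granting fullness, the decomposition $\D^b(Y_5)=\langle\CB_{Y_5},\CO_Y,\CO_Y(1)\rangle$ identifies $\CB_{Y_5}$ with the subcategory generated by the leading exceptional pair $A:=W'/\CU\otimes\CO_Y(-1)$, $B:=\CU$. The Hom-space is computed by the same Koszul--Bott machine: since $\CU(1)\cong\CU^*$ on the threefold $Y$, one has $\RHom(A,B)=H^\bullet\big(Y,\ (W'/\CU)^*\otimes\CU^*\big)$, which I expect to collapse to $\kk^3$ in degree $0$ with all higher groups vanishing, while $\RHom(B,A)=0$ by exceptionality. Consequently $A\oplus B$ has no higher self-extensions and generates $\CB_{Y_5}$, so it is a tilting object whose endomorphism algebra is the path algebra of the Kronecker quiver $\Qu_3$: the $2\times 2$ upper-triangular algebra with diagonal $\kk\oplus\kk$ and off-diagonal the three-dimensional space $\Hom(A,B)$. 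Standard tilting theory then yields the equivalence $\CB_{Y_5}\cong\D^b(\Qu_3)$, completing the proof.
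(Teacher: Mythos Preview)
The paper does not prove this theorem; it is quoted from Orlov~\cite{Or} and used as a black box, so there is no in-paper proof to compare against. Your outline follows the natural route and is essentially correct, with one point to tighten. The claim that $F$ is a direct summand of $Li^*i_*F$ is not justified: although $Li^*i_*\CO_Y$ does split here (the restricted Koszul differentials vanish on $Y$), for arbitrary $F$ one only obtains a filtration on $Li^*i_*F$ with graded pieces $F\otimes\Lambda^kN_{Y/G}^*[k]$, and trying to extract $F$ from this filtration runs in circles since the other graded pieces are twists of $F$ itself. The clean replacement is the adjunction argument: if $\Hom(Li^*E_j,F[n])=0$ for all $j,n$ then $\Hom(E_j,i_*F[n])=0$, hence $i_*F=0$ in $\D^b(G)$, hence $F=0$; this already shows that the $Li^*E_j$ generate $\D^b(Y)$. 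With that fix the remainder of your plan is sound: the exceptionality and the computation $\RHom(A,B)\cong\kk^3$ reduce via the Koszul resolution to Borel--Weil--Bott on $\Gr(2,5)$, the tilting identification of $\langle A,B\rangle$ with $\D^b(\Qu_3)$ is standard, and the substantive step you correctly flag --- placing each restricted Kapranov generator inside $\langle W'/\CU\otimes\CO_Y(-1),\CU,\CO_Y,\CO_Y(1)\rangle$ by repeated use of the tautological and Koszul exact sequences --- is exactly the content of Orlov's original argument.
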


From these two results we immediately deduce the required equivalence.

\begin{corollary}\label{cor5}
For any Fano threefold $X_{22}$ of genus $12$ and for the unique del Pezzo threefold $Y_5$ of degree $5$
there is an equivalence of categories $\CA_{X_{22}} \cong \D^b(\SQ_3) \cong \CB_{Y_5}$.
\end{corollary}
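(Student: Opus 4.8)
The plan is to deduce the statement by transitivity from the two theorems immediately preceding it, namely the description of $\D^b(X_{22})$ from \cite{K1,K2} and Orlov's description of $\D^b(Y_5)$ from \cite{Or}. Each of these exhibits a full exceptional collection of length four, and in each case the first two terms form an exceptional pair with three-dimensional $\Hom$-space and vanishing higher $\Ext$. This is exactly the data of the Kronecker quiver with three arrows, so both nontrivial components get identified with one and the same category $\D^b(\SQ_3)$.

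First I would pin down $\CA_{X_{22}}$ and $\CB_{Y_5}$ as the subcategories generated by these pairs. The preceding Lemma gives $\CA_{X_{22}} = \langle \CE_2, \CO_X \rangle^\perp$ and the Corollary gives $\CB_{Y_5} = \langle \CO_Y, \CO_Y(H) \rangle^\perp$. In a semiorthogonal decomposition the right orthogonal to the subcategory spanned by the last two terms of an exceptional collection is precisely the subcategory spanned by the first two. Matching $\langle \CE_2, \CO_X\rangle$ and $\langle \CO_Y, \CO_Y(H)\rangle$ with the tails of the two collections, I conclude that $\CA_{X_{22}} = \langle W/\CE_3\otimes\CO_X(-1), \CE_3^*\otimes\CO_X(-1) \rangle$ and $\CB_{Y_5} = \langle W'/\CU\otimes\CO_Y(-1), \CU \rangle$.

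The heart of the matter, already contained in the two cited theorems, is the standard fact that an exceptional pair $(A,B)$ with $\Hom(A,B) = \kk^3$ and $\Ext^{\ne 0}(A,B) = 0$ generates a triangulated category equivalent to $\D^b(\SQ_3)$. This is a tilting argument in the spirit of Bondal: $A \oplus B$ is a tilting object, its endomorphism algebra is the upper-triangular algebra $\left(\begin{smallmatrix} \kk & \kk^3 \\ 0 & \kk \end{smallmatrix}\right)$, which is the path algebra of $\SQ_3$, and hence the generated category is the derived category of representations of $\SQ_3$. Applying this to both pairs gives $\CA_{X_{22}} \cong \D^b(\SQ_3)$ and $\CB_{Y_5} \cong \D^b(\SQ_3)$, and composing yields the sought equivalence $\CA_{X_{22}} \cong \CB_{Y_5}$, after the harmless identification $\SQ_3 = \Qu_3$.

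At the level of the corollary there is no real obstacle: once the two descriptions are available, the equivalence is a formal composition. The genuine difficulty has been pushed into the cited theorems --- into Mukai's construction of the exceptional bundles $\CE_2$ and $\CE_3$, into the proof that the two four-term collections are full, and most delicately into the computation that in each case the relevant $\Hom$-space has dimension exactly $3$ with all higher $\Ext$-groups vanishing. It is this coincidence of numerical invariants, rather than anything in the corollary itself, that makes the two components abstractly the same.
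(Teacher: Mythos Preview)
Your proposal is correct and follows the same approach as the paper, which simply says ``From these two results we immediately deduce the required equivalence'' --- the two cited theorems already contain the identifications $\CA_{X_{22}} \cong \D^b(\Qu_3)$ and $\CB_{Y_5} \cong \D^b(\Qu_3)$ as part of their statements, so the corollary is a bare composition. Your additional unpacking of the tilting argument and the identification of the orthogonals with the first two terms of each collection is accurate but is really a sketch of how the ``Moreover'' clauses of the two theorems are proved, rather than something the corollary itself requires; note also that $\SQ_3$ and $\Qu_3$ are literally the same symbol in the paper's macros, so no identification is needed.
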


\subsection{The case $d=4$}

Recall that the moduli space $\CMF^2_4$ of del Pezzo threefolds $Y_4$ of degree $4$ is isomorphic to the moduli space $\CM_2$
of curves of genus $2$. The isomorphism is constructed as follows. Let $Y = Y_4 = Q \cap Q' \subset \PP^5$.
Consider the pencil of quadrics $\{Q_\lambda\}_{\lambda\in\PP^1}$ generated by $Q$ and $Q'$. If $Y$ is smooth then the generic $Q_\lambda$ is smooth
and there are precisely $6$ distinct points $\lambda_1,\dots,\lambda_6 \in \PP^1$ for which the quadric $Q_\lambda$ is degenerate.
Consider the twofold covering $C(Y) \to \PP^1$ ramified at the points $\lambda_i$. Then $C(Y)$ is a smooth curve of genus $2$.

\begin{theorem}[\cite{BO1,K6}]
The map $\CMF^2_4 \to \CM_2$, $Y \mapsto C(Y)$ is an isomorphism.
Moreover, there is an equivalence $\CB_{Y_4} \cong \D^b(C(Y_4))$.
\end{theorem}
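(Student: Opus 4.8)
The plan is to treat the two assertions separately: the moduli isomorphism by the classical theory of pencils of quadrics, and the derived equivalence by the even Clifford algebra attached to that pencil.

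For the first assertion, write $Y = Q \cap Q' \subset \PP^5$ and let $A, A'$ be the symmetric $6\times 6$ matrices of the two quadrics. The degeneration locus of the pencil is the zero scheme of $\det(\mu A - \lambda A')$, a binary sextic on $\PP^1$ whose six roots $\lambda_1, \dots, \lambda_6$ are exactly the branch points of $C(Y) \to \PP^1$. Smoothness of $Y$ is equivalent to these being distinct, and then the pencil is regular, so $A, A'$ can be simultaneously diagonalized; thus $Y$ is projectively equivalent to $\{\sum x_i^2 = 0\} \cap \{\sum \lambda_i x_i^2 = 0\}$. One then reads off that the isomorphism class of $Y$ depends only on the unordered set $\{\lambda_i\}$ modulo the reparametrizations $\PGL_2$ of $\PP^1$, the residual sign and permutation symmetries being absorbed into $\PGL_6$. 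Since a genus-$2$ curve is precisely a double cover of $\PP^1$ branched at six distinct points, the assignment $\{\lambda_i\}\mapsto C(Y)$ identifies the space of six points on $\PP^1$ modulo $\PGL_2$ with $\CM_2$. I would then check that $Y \mapsto C(Y)$ is a morphism of moduli functors (the binary sextic varies algebraically with $Y$) whose inverse is the explicit diagonal construction above, yielding $\CMF^2_4 \cong \CM_2$.

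For the equivalence $\CB_{Y_4} \cong \D^b(C(Y))$ I would realize the pencil as a quadric fibration. Blowing up $Y$ gives $\TY := \Bl_Y \PP^5 \to \PP^1$, a fibration in four-dimensional quadrics whose degenerate fibers lie over $\lambda_1, \dots, \lambda_6$. Attached to it is the sheaf $\CC_0$ of even parts of the Clifford algebra of the relative rank-six quadratic form, a sheaf of $\CO_{\PP^1}$-algebras, and the theorem on derived categories of quadric fibrations of~\cite{K6} produces a semiorthogonal decomposition of $\D^b(\TY)$ whose noncommutative component is $\D^b(\PP^1, \CC_0)$, the derived category of coherent $\CC_0$-modules, the remaining pieces being copies of $\D^b(\PP^1)$. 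Comparing this with the decomposition coming from the blow-up formula for $\Bl_Y \PP^5$ and with the decomposition $\D^b(Y) = \langle \CB_Y, \CO_Y, \CO_Y(H)\rangle$ of the Corollary, I would match the leftover components and identify $\CB_Y \cong \D^b(\PP^1, \CC_0)$; conceptually this matching is the homological projective duality between $\PP^5$ in its double Veronese embedding and the Clifford side, specialized to the linear section cut out by the pencil.

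It remains to untwist the Clifford side. For the rank-six form the center of $\CC_0$ is the discriminant extension of $\CO_{\PP^1}$, which is precisely $\CO_{C(Y)}$ for the double cover branched along the degeneration sextic; hence $\CC_0$ is a sheaf of algebras on $C(Y)$, Azumaya since the fibration degenerates simply (with corank one) over each $\lambda_i$. As $C(Y)$ is a smooth curve over an algebraically closed field, Tsen's theorem gives $\mathrm{Br}(C(Y)) = 0$, so $\CC_0$ is Morita-equivalent to $\CO_{C(Y)}$ and $\D^b(\PP^1, \CC_0) \cong \D^b(C(Y))$, completing the chain $\CB_{Y_4} \cong \D^b(\PP^1, \CC_0) \cong \D^b(C(Y_4))$. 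I expect the main obstacle to be the preceding paragraph: producing the precise match between $\CB_Y$ and the Clifford component. Tracking the effect of the blow-up on semiorthogonal decompositions and checking that $\CO_Y, \CO_Y(H)$ correspond, under restriction from $\Bl_Y \PP^5$, to the tautological part of the quadric-fibration decomposition is the delicate bookkeeping, whereas the identification of the center with $C(Y)$ and the vanishing of the Brauer group are comparatively formal once this is in place.
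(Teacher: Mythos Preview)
The paper does not give its own proof of this theorem: it is stated with citations to \cite{BO1,K6} and then used as input for Corollary~\ref{cor4}. Your sketch is precisely the argument of \cite{K6} (the quadric-fibration/even-Clifford machinery applied to the pencil through $Y$, followed by identification of the center with the hyperelliptic curve and Tsen), together with the classical simultaneous diagonalization for the moduli statement, so it matches the cited sources and there is no discrepancy to discuss.
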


Our goal is to show that $X_{18}$ threefolds behave in a similar fashion.
Indeed, recall that by definition any $X = X_{18}$ is a linear section
of codimension $2$ in $\GTGr(2,7)$. Let $\{\CX_\lambda\}_{\lambda\in\PP^1}$ be the pencil of hyperplane sections
of $\GTGr(2,7)$ passing through $X$. Since the projective dual of $\GTGr(2,7)$ is a hypersurface of degree $6$,
it follows that there are precisely $6$ distinct points $\lambda_1,\dots,\lambda_6 \in \PP^1$ for which
$\CX_\lambda$ is singular. Consider the twofold covering $C(X) \to \PP^1$ ramified at the points $\lambda_i$.
Then $C(X)$ is a smooth curve of genus $2$.
Thus we obtain a map $\CMF^1_{18} \to \CM_2$, $X \mapsto C(X)$.

\begin{theorem}[\cite{K5}]
There is an equivalence $\CB_{X_{18}} \cong \D^b(C(X_{18}))$.
\end{theorem}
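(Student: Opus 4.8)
The plan is to reproduce, for $X = X_{18}$, the mechanism underlying the equivalence $\CB_{Y_4} \cong \D^b(C(Y_4))$ of the previous theorem: namely, to realize the nontrivial component $\CA_{X_{18}}$ of $\D^b(X)$ (the component denoted $\CB_{X_{18}}$ in the statement) as the derived category of modules over a sheaf of algebras on $\PP^1$ whose centre is the structure sheaf of the double cover $C(X) \to \PP^1$. The starting point is homological projective duality for the $G_2$-Grassmannian $\GTGr(2,7) \subset \PP^{13}$. First I would fix a Lefschetz decomposition of $\D^b(\GTGr(2,7))$ with respect to $\CO(1)$, built from the homogeneous bundles on $\GTGr(2,7)$, arranged so that its restriction to $X$ reproduces the pair $\CE_2,\CO_X$ from the decomposition $\D^b(X) = \langle \CA_X, \CE_2, \CO_X\rangle$ of the Lemma, where $\CE_2$ is the rank~$2$ Mukai bundle. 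The output of this step is a homological projective dual which, since the projective dual of $\GTGr(2,7)$ is the sextic hypersurface $\nabla \subset \PP^{13*}$, should be realized as a sheaf of noncommutative algebras $\CR$ on $\PP^{13*}$ degenerating precisely over $\nabla$.

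Next I would restrict to the pencil. The codimension-$2$ linear section $X$ is dual to the line $\ell = \langle \CX_\lambda\rangle_{\lambda\in\PP^1} \subset \PP^{13*}$ spanned by the pencil of hyperplane sections of $\GTGr(2,7)$ through $X$. The HPD theorem then yields an equivalence $\CA_{X} \cong \D^b(\PP^1, \CR|_\ell)$ between the nontrivial component of $\D^b(X)$ and the derived category of coherent $\CR|_\ell$-modules on $\PP^1$, and the six points $\lambda_1,\dots,\lambda_6$ at which $\CX_{\lambda_i}$ is singular are exactly the points $\ell \cap \nabla$ where $\CR|_\ell$ fails to be Azumaya. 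This is the step mirroring Kuznetsov's quadric-fibration theorem used for $Y_4$, where the even Clifford algebra $\Clev$ of the pencil of quadrics plays the role of $\CR|_\ell$.

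Finally I would identify $\D^b(\PP^1, \CR|_\ell)$ with $\D^b(C(X))$. The claim is that $\CR|_\ell$, like the even Clifford algebra in the $Y_4$ case, is a sheaf of algebras whose centre is the pushforward $f_*\CO_{C(X)}$ along the double cover $f\colon C(X) \to \PP^1$ branched at $\lambda_1,\dots,\lambda_6$, and which becomes Morita-trivial after base change to $C(X)$. Exhibiting the splitting module that realizes this Morita equivalence — equivalently, showing that the relevant Brauer class on $C(X)$ vanishes — then gives $\D^b(\PP^1, \CR|_\ell) \cong \D^b(C(X))$ and completes the proof. I expect the main obstacle to lie in the first and third steps together: constructing the homological projective dual of the exceptional homogeneous space $\GTGr(2,7)$ explicitly enough to identify $\CR$ as a Clifford-type sheaf of algebras, and then pinning down its centre as $f_*\CO_{C(X)}$ and trivializing its Brauer class. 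The genus-$2$ geometry is forced by the degree of the projective dual, but matching it to $C(X)$ requires genuine control of the algebra structure of $\CR$, for which — unlike the quadric case — there is no classical quadratic-form description available.
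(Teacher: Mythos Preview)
The paper does not prove this theorem at all: it is stated with a bare citation to~\cite{K5} and then immediately combined with the previous result to deduce Corollary~\ref{cor4}. There is therefore no proof in the paper to compare your proposal against. (You are right, incidentally, that the $\CB_{X_{18}}$ in the statement is a typo for $\CA_{X_{18}}$; the paper's convention is $\CA$ for index~$1$ and $\CB$ for index~$2$.)

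As for the substance of your outline, it is a faithful sketch of the method actually used in the cited reference: \cite{K5} is precisely the paper in which Kuznetsov develops the ``hyperplane sections'' machinery that is the precursor to homological projective duality, and the $\GTGr(2,7)$ case is treated there by constructing the relevant sheaf of algebras on the dual projective space and restricting to the line. Your identification of the hard steps is accurate --- the work lies in building the HP-dual of $\GTGr(2,7)$ and then recognizing the restricted algebra as Morita-trivial over the double cover. One remark: in \cite{K5} the argument does not proceed by first constructing an abstract Clifford-type sheaf $\CR$ on all of $\PP^{13*}$ and then specializing; rather, it works with the universal hyperplane section directly and exploits an auxiliary quadric-fibration structure on $\GTGr(2,7)$ itself, which is what makes the even Clifford algebra appear and allows the identification with $C(X)$. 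So your third step is slightly more accessible than you suggest, because a genuine quadratic-form description \emph{is} available once one unwinds the geometry of $\GTGr(2,7)$.
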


Combining these results we obtain the case $d = 4$.
Let $Z_4 \subset \CMF^2_4 \times\CMF^1_{18}$ be the graph of the morphism
$\CMF^1_{18} \to \CM_2 \cong \CMF^2_4$.

\begin{corollary}\label{cor4}
For any pair of threefolds $(Y_4,X_{18}) \in Z_4 \subset \CMF^2_4 \times\CMF^1_{18}$ we have an equivalence of categories $\CA_{X_{18}} \cong \CB_{Y_4}$.
\end{corollary}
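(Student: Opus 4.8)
The plan is to obtain the equivalence purely by composing the two equivalences of derived categories stated immediately above, using the defining property of the correspondence $Z_4$ to match up the two intermediate curves. First I would unwind the definition of $Z_4$. By construction $Z_4$ is the graph of the morphism $\CMF^1_{18} \to \CM_2 \cong \CMF^2_4$ sending $X$ to the genus $2$ curve $C(X)$, where the identification $\CM_2 \cong \CMF^2_4$ is the inverse of the isomorphism $Y \mapsto C(Y)$ furnished by the theorem of \cite{BO1,K6}. Hence a pair $(Y_4,X_{18})$ lies in $Z_4$ precisely when the two associated genus $2$ curves coincide, that is, $C(Y_4) \cong C(X_{18})$.

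Next I would invoke the two structural results. The theorem of \cite{BO1,K6} provides an equivalence $\CB_{Y_4} \cong \D^b(C(Y_4))$, while the theorem of \cite{K5} provides an equivalence $\CA_{X_{18}} \cong \D^b(C(X_{18}))$ for the nontrivial component $\CA_{X_{18}}$ of $\D^b(X_{18})$. Since any isomorphism of smooth projective curves induces an exact equivalence of their bounded derived categories, the identity $C(Y_4) \cong C(X_{18})$ established in the previous step yields $\D^b(C(Y_4)) \cong \D^b(C(X_{18}))$. Composing the three equivalences then gives
\[
\CA_{X_{18}} \cong \D^b(C(X_{18})) \cong \D^b(C(Y_4)) \cong \CB_{Y_4},
\]
which is the assertion of the corollary.

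Given the cited results, the deduction is therefore essentially formal, and I do not expect a genuine obstacle at this level: the only point requiring care is that the two a priori unrelated constructions of a genus $2$ curve --- one from the pencil of quadrics through $Y_4 = Q \cap Q'$, the other from the pencil of hyperplane sections of $\GTGr(2,7)$ through $X_{18}$ --- are forced to agree, and this matching is exactly what the definition of $Z_4$ as a graph encodes. Consequently the substantive content lies in the two input theorems of \cite{BO1,K6} and \cite{K5} rather than in the present argument.
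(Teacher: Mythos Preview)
Your proposal is correct and follows exactly the approach the paper intends: the corollary is stated without proof, preceded only by the sentence ``Combining these results we obtain the case $d = 4$,'' and your argument is precisely that combination. The only thing worth noting is that you have silently corrected a typo in the paper's statement of the theorem from \cite{K5}, which reads $\CB_{X_{18}}$ but must mean $\CA_{X_{18}}$.
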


\subsection{The case $d=3$}

While in the previous two cases we were able to describe the categories under the question explicitly,
for $d = 3$ this is no longer possible. We can only prove an equivalence in this case.

Recall that by definition any $X = X_{14}$ is a linear section of codimension $5$ in $\Gr(2,6)$.
Let $W$ be the six-dimensional vector space, so that $\Gr(2,6) = \Gr(2,W) \subset \PP(\Lambda^2W)$.
Then $X$ can be described by a $5$-dimensional subspace $A \subset \Lambda^2W^*$ or, equivalently,
by an injective map $\alpha:A \to \Lambda^2W$, where $A$ is a fixed vector space of dimension $5$.
Let us denote the corresponding threefold $X_{14}$ by $X(\alpha)$.

On the other hand, consider the whole space $\PP(\Lambda^2W^*)$, the space of skew-symmetric forms on $W$.
Consider the hypersurface therein consisting of degenerate skew-forms. It is well known that the equation
of this hypersurface is given by the Pfaffian polynomial. It follows that
it is a cubic hypersurface, which is denoted by $\Pf(W)$ and is called the Pfaffian variety.
Certainly, the Pfaffian variety is singular, its singular locus coincides with the set of all skew-forms
of rank $2$ on $W$, that is with the Grassmannian $\Gr(2,W^*) \subset \Pf(W) \subset \PP(\Lambda^2W^*)$.
However, the codimension of the singular locus in $\PP(\Lambda^2W^*)$ is $14-8 = 6$, so for generic $\alpha$
the preimage of $\Pf(W)$ in $\PP(A)$ is a smooth cubic hypersurface, which we denote $Y(\alpha)$.
Further, associating with a degenerate skew-form on $W$ its kernel, defines a rank 2 subbundle $K \subset W\otimes\CO$
on the smooth locus of $\Pf(W)$. Let $E(\alpha) = \alpha^*K\otimes\CO_{Y(\alpha)}(1)$, where in the right-hand-side
we consider $\alpha$ as a map $Y(\alpha) \to \Pf(W)$ and $\CO_{Y(\alpha)}(1)$ is the restriction of $\CO_{\PP(A)}(1)$.

\begin{theorem}[\cite{K3,K9}]\label{th3}
The map $X(\alpha) \mapsto (Y(\alpha),E(\alpha))$ gives an isomorphism
of the moduli space $\CMF^1_{14}$ of Fano threefolds $X_{14}$ of genus $8$
and the moduli space $\CMFI^2_3(2)$ of pairs $(Y,E)$, where $Y$ is a smooth cubic threefold
and $E$ is a stable vector bundle on $Y$ of rank $2$, $c_1(E) = 0$, $c_2(E) = 2L$ with $H^1(Y,E(-1)) = 0$.
For every $\alpha$ there is an equivalence of categories
$\CB_{X_{14}(\alpha)} \cong \CA_{Y_3(\alpha)}$.
\end{theorem}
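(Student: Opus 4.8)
The plan is to deduce both assertions from homological projective duality (HPD) for the Pl\"ucker-embedded Grassmannian $\Gr(2,W)$, whose HPD dual is a categorical (noncommutative) resolution of the Pfaffian cubic $\Pf(W)\subset\PP(\Lambda^2W^*)$; this duality, established in~\cite{K3,K9}, is the engine of the whole argument. First I would fix a Lefschetz decomposition of $\D^b(\Gr(2,W))$ with respect to the Pl\"ucker class $\CO(1)$, normalized so that the piece surviving a codimension~$5$ linear section is an exceptional pair, and record the dual Lefschetz decomposition of the resolution of $\Pf(W)$ over $\PP(\Lambda^2W^*)$. Since the singular locus of $\Pf(W)$ is $\Gr(2,W^*)$, of codimension~$6$ in $\PP(\Lambda^2W^*)=\PP^{14}$, and $\PP(A)\cong\PP^4$, I would check that for $\alpha$ in the moduli locus the subspace $\PP(A)$ avoids this singular locus, so that $Y(\alpha)=\Pf(W)\cap\PP(A)$ is a smooth cubic threefold and the categorical resolution restricts to $\D^b(Y(\alpha))$ itself; I would also verify that smoothness of $X(\alpha)$ and of $Y(\alpha)$ correspond, so that the two moduli loci match.

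Next I would apply the HPD theorem to the subspace $A\subset\Lambda^2W^*$, taking $L=A$ of dimension~$5$ with $L^\perp=A^\perp\subset\Lambda^2W$ of codimension~$5$: then $X(\alpha)=\Gr(2,W)\cap\PP(A^\perp)$ and $Y(\alpha)=\Pf(W)\cap\PP(A)$ are mutually HPD-dual linear sections, and the theorem yields a common triangulated component $\CC_\alpha$ together with semiorthogonal decompositions of $\D^b(X(\alpha))$ and of $\D^b(Y(\alpha))$ in which the orthogonal complement of $\CC_\alpha$ is the surviving part of the respective Lefschetz decomposition. By Proposition~\ref{rr} this surviving part has numerical $K_0$ of rank~$2$, so on each side it is an exceptional pair. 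On the Grassmannian side it restricts to $\langle\CE_2,\CO_X\rangle$ --- here I use the Remark following Theorem~\ref{muth}, which identifies $\CE_2$ with the restriction of the rank~$2$ tautological bundle of $\Gr(2,W)$, together with a mutation bringing the restricted Lefschetz objects to $(\CE_2,\CO_X)$ --- so that $\CC_\alpha$ is exactly the nontrivial component $\CA_{X_{14}}$ of $\D^b(X_{14})=\langle\CA_{X_{14}},\CE_2,\CO_X\rangle$. On the Pfaffian side the surviving part restricts to $\langle\CO_Y,\CO_Y(H)\rangle$, identifying $\CC_\alpha$ with the component $\CB_{Y_3}$ of $\D^b(Y_3)=\langle\CB_{Y_3},\CO_Y,\CO_Y(H)\rangle$. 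Combining the two identifications gives the required equivalence of the nontrivial components $\CA_{X_{14}(\alpha)}\cong\CB_{Y_3(\alpha)}$ for every admissible $\alpha$.

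For the moduli statement I would realize both moduli spaces as quotients by $\GL(W)$ of the open locus of admissible $\alpha$ in $\Gr(5,\Lambda^2W^*)$. The assignment $\alpha\mapsto(Y(\alpha),E(\alpha))$ with $E(\alpha)=\alpha^*K\otimes\CO_{Y(\alpha)}(1)$ is algebraic; the points to check are that $E(\alpha)$ is a rank~$2$ bundle with $c_1(E(\alpha))=0$, $c_2(E(\alpha))=2L$ and $H^1(Y(\alpha),E(\alpha)(-1))=0$, which I would obtain by a Chern-class and cohomology computation for the kernel bundle $K$ pulled back along $\alpha$, and that $E(\alpha)$ is stable, which follows from $\Pic Y(\alpha)=\ZZ$ and the irreducibility of $K$. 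To invert the construction I would reconstruct $W$ and $\alpha$ from a pair $(Y,E)$: the cubic $Y$ itself determines $A=H^0(Y,\CO_Y(1))^*$, while the vanishing $H^1(Y,E(-1))=0$ lets the cohomology of twists of $E$ recover the six-dimensional space $W$ together with the embedding of $Y$ into $\Pf(W)$ as the degeneracy locus of the associated skew-form, with $E(-1)=\alpha^*K$ recording the kernel. This shows the map is a bijection on points; to promote it to an isomorphism of moduli I would match deformation theories, using that the equivalence $\CA_{X_{14}(\alpha)}\cong\CB_{Y_3(\alpha)}$ identifies the Hochschild cohomology controlling the deformations of the two nontrivial components, which together with the rigidity of the residual exceptional objects identifies $H^1(X,T_X)$ with the tangent space to $\CMFI^2_3(2)$ at $(Y,E)$.

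The main obstacle is twofold. Granting the HPD duality of~\cite{K3,K9}, the chief categorical difficulty is the precise matching of the abstract common component $\CC_\alpha$ with the concrete components $\CA_{X_{14}}$ and $\CB_{Y_3}$: this requires controlling the mutations so that the surviving Lefschetz blocks become exactly $(\CE_2,\CO_X)$ on one side and $(\CO_Y,\CO_Y(H))$ on the other, with compatible normalizations of the two Lefschetz decompositions across the duality. The chief geometric difficulty is the moduli identification --- proving stability and the cohomology vanishing for $E(\alpha)$, carrying out the inverse reconstruction in families, and verifying that the resulting bijection is an isomorphism of schemes rather than merely of sets.
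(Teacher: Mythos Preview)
The paper does not give a proof of this theorem; it is simply quoted from the cited references~\cite{K3,K9}. Your sketch via homological projective duality for $\Gr(2,W)$ against the Pfaffian is precisely the method of~\cite{K9}, and your identification of the two main obstacles (matching the surviving Lefschetz blocks with the concrete exceptional pairs, and upgrading the pointwise bijection of moduli to a scheme isomorphism) is accurate. Note incidentally that the statement as printed has the labels swapped --- by the paper's own conventions it should read $\CA_{X_{14}(\alpha)}\cong\CB_{Y_3(\alpha)}$, as you wrote.

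Two small points on the sketch itself. First, invoking Proposition~\ref{rr} to deduce that the surviving block is an exceptional pair is not quite the logic: that Proposition computes the numerical $K_0$ of the \emph{complement} $\CA_X$ and $\CB_Y$, not of the Lefschetz residue, and in any case rank-$2$ numerical $K_0$ does not by itself force a category to be generated by an exceptional pair --- in the HPD framework the residue is an exceptional pair by construction, because the Lefschetz blocks are. Second, ``stability follows from $\Pic Y=\ZZ$ and irreducibility of $K$'' is too quick: one needs to rule out destabilizing line subbundles of $E(\alpha)$, which in~\cite{K3} is done via the explicit geometry of the kernel bundle on the Pfaffian.

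It is also worth knowing that the earlier reference~\cite{K3} predates the general HPD formalism and proves the derived equivalence by constructing an explicit Fourier--Mukai kernel on $X(\alpha)\times Y(\alpha)$ coming from the incidence quadric in $\PP(W)\times\PP(A)$; the HPD argument of~\cite{K9} is a repackaging of this. Either route is acceptable for the purpose of this survey, since the paper only needs the statement.
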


\begin{remark}
The bundles $E$ in the statement of the Theorem are known as {\sf instanton bundles of charge~$2$}.
\end{remark}

Let $Z_3 \subset \CMF^2_3 \times\CMF^1_{14}$ be the graph of the morphism
$\CMF^1_{14} \cong \CMFI^2_3(2) \to \CMF^2_3$.

\begin{corollary}\label{cor3}
For any pair of threefolds $(Y_3,X_{14}) \in Z_3 \subset \CMF^2_3 \times\CMF^1_{14}$ we have an equivalence of categories $\CA_{X_{14}} \cong \CB_{Y_3}$.
\end{corollary}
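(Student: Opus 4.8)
The plan is to read Corollary~\ref{cor3} off directly from Theorem~\ref{th3}, in exactly the same spirit in which Corollaries~\ref{cor5} and~\ref{cor4} were deduced from their input theorems: all of the geometric and categorical substance is concentrated in Theorem~\ref{th3}, and the corollary only has to transport that equivalence from the skew-form parameter $\alpha$ to the correspondence $Z_3$. First I would unwind the definition of $Z_3$. By construction it is the graph of the composite
$$
\CMF^1_{14} \exto{\sim} \CMFI^2_3(2) \longrightarrow \CMF^2_3,
$$
whose first arrow is the moduli isomorphism $X(\alpha)\mapsto(Y(\alpha),E(\alpha))$ of Theorem~\ref{th3} and whose second arrow is the forgetful projection $(Y,E)\mapsto Y$. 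This projection is well defined because every $(Y,E)\in\CMFI^2_3(2)$ has as its $Y$-component a \emph{smooth} cubic threefold, i.e.\ a point of $\CMF^2_3$. Consequently a point of $Z_3$ is precisely a pair $(Y_3,X_{14})$ arising from a single datum $\alpha\colon A\hookrightarrow\Lambda^2W$, namely $Y_3=Y(\alpha)$ and $X_{14}=X(\alpha)$.

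Second, for such a pair I would invoke the categorical half of Theorem~\ref{th3}, which for every $\alpha$ produces an equivalence between the nontrivial components of the two threefolds. Reading the component labels as in Conjecture~\ref{mainc} --- the index-$1$ threefold $X_{14}$ contributing $\CA_{X_{14}}$ from $\D^b(X_{14})=\langle\CA_{X_{14}},\CE,\CO\rangle$, and the index-$2$ cubic $Y_3$ contributing $\CB_{Y_3}$ from $\D^b(Y_3)=\langle\CB_{Y_3},\CO,\CO(H)\rangle$ --- substitution of $Y_3=Y(\alpha)$ and $X_{14}=X(\alpha)$ turns this equivalence into $\CA_{X_{14}}\cong\CB_{Y_3}$, which is exactly the assertion of the corollary. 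Since $Z_3$ is a graph, ranging $\alpha$ over all generic data visits every point of $Z_3$, so the equivalence holds at each such point.

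I expect no genuine obstacle: the corollary is a formal consequence of Theorem~\ref{th3}. The only points requiring care are bookkeeping ones --- confirming that the forgetful map $\CMFI^2_3(2)\to\CMF^2_3$ indeed lands in the smooth locus, and that the two nontrivial components are matched in the right order (the apparent transposition of the symbols $\CA$ and $\CB$ in the statement of Theorem~\ref{th3} relative to Conjecture~\ref{mainc} must be read consistently, with $\CA$ attached to the index-$1$ side and $\CB$ to the index-$2$ side). The hard part, namely the actual construction of the equivalence through the linear/Pfaffian duality between $X_{14}=\Gr(2,W)\cap\PP^9$ and the Pfaffian cubic $Y_3=\Pf(W)\cap\PP(A)$, together with the instanton bundle $E(\alpha)=\alpha^*K\otimes\CO_{Y(\alpha)}(1)$, is entirely absorbed into Theorem~\ref{th3} and is not reproved here.
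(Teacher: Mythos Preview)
Your proposal is correct and matches the paper's own treatment: the corollary is stated immediately after Theorem~\ref{th3} with no separate proof, as it follows formally by identifying points of the graph $Z_3$ with data $\alpha$ and invoking the equivalence of Theorem~\ref{th3}. Your remark about the apparent swap of the labels $\CA$ and $\CB$ in the statement of Theorem~\ref{th3} is also apt; this is a typographical inconsistency in the paper, and reading it consistently with the conventions of Conjecture~\ref{mainc} (as you do) gives exactly the desired conclusion.
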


\subsection{Geometrical correspondences}

Actually the proof of Theorem~\ref{th3} in~\cite{K3} gives more than just an equivalence of categories.
It gives also a geometrical correspondence between $X(\alpha)$ and $Y(\alpha)$.

Let $\PP_{X(\alpha)}(\CE)$ be the projectivization of the exceptional rank $2$ bundle on $X(\alpha)$.
Since $\CE$ is the restriction of the tautological bundle from the Grassmannian $\Gr(2,W)$, we have
a canonical map $\PP_{X(\alpha)}(\CE) \to \PP(W)$. On the other hand, one can check that
we have an isomorphism $H^0(Y(\alpha),E(\alpha)^*\otimes\CO_{Y(\alpha)}(1)) \cong W^*$,
hence we have also a canonical map $\PP_{Y(\alpha)}(E(\alpha)) \to \PP(W)$.

\begin{theorem}[\cite{K3}]\label{d3geom}
The images of $\PP_{X(\alpha)}(\CE)$ and $\PP_{Y(\alpha)}(E(\alpha))$ in $\PP(W)$ coincide
with a quartic hypersurface $M \subset \PP(W)$ singular along a curve $C \subset M$ of genus $26$.
The maps $\PP_{X(\alpha)}(\CE) \to M$ and $\PP_{Y(\alpha)}(E(\alpha)) \to M$ are small contractions
and induce isomorphisms over the complement of $C$. Moreover, the induced birational isomorphism
$\xymatrix@1{\PP_{X(\alpha)}(\CE)\  \ar@{-->}[r] & \ \PP_{Y(\alpha)}(E(\alpha))}$ is a flop.
\end{theorem}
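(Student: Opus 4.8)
The plan is to realize both $\PP_{X(\alpha)}(\CE)$ and $\PP_{Y(\alpha)}(E(\alpha))$ as small resolutions of one and the same determinantal quartic fourfold $M\subset\PP(W)$, and then to recognize the flop from this shared structure. The organizing device is a contraction morphism on $\PP(W)$. Writing $0\to\CO(-1)\to W\otimes\CO\to\CQ\to 0$ for the tautological sequence and contracting a skew form $a\in A\subset\Lambda^2W^*$ against a local generator $w$ of $\CO(-1)$, one gets $\iota_w a\in W^*$; since $a(w,w)=0$ this form lies in $w^\perp=\CQ^*_{[w]}$, so $a\mapsto\iota_w a$ defines a morphism
$$
\mu:\ A\otimes\CO_{\PP(W)}\longrightarrow\CQ^*\otimes\CO_{\PP(W)}(1)
$$
between bundles of rank $5$. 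I write $\mu_w$ for the fibre at $[w]$ and set $M:=\{[w]\in\PP(W)\mid\mu_w\text{ is not an isomorphism}\}$.

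The first step is to identify the two images with $M$ and compute its degree. On the $Y$-side the fibre of $\PP_{Y(\alpha)}(E(\alpha))$ over $[\phi]\in Y$ maps isomorphically onto the line $\PP(\ker\phi)\subset\PP(W)$, so $[w]$ lies in the image exactly when some nonzero $\phi\in A$ has $w\in\ker\phi$, i.e. when $\ker\mu_w\ne 0$. On the $X$-side, $[w]$ lies in the image exactly when $w$ is contained in some $2$-plane $U=\langle w,v\rangle\in X$, i.e. when there is $v\notin\langle w\rangle$ with $a(w,v)=0$ for all $a\in A$; as $w$ itself always lies in $(\Im\mu_w)^\perp$, this amounts to $\dim(\Im\mu_w)^\perp\ge 2$, that is $\rank\mu_w\le 4$. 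Hence both images coincide with $M$. Taking determinants, $\det\mu$ is a section of $\det(\CQ^*\otimes\CO(1))=\det\CQ^*\otimes\CO(5)=\CO(-1)\otimes\CO(5)=\CO(4)$, so $M$ is a quartic hypersurface.

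Next I would pin down the singular locus. For generic $\alpha$ the second degeneracy locus $C:=\{[w]\mid\rank\mu_w\le 3\}$ has the expected codimension $(5-3)^2=4$ in $\PP^5$, hence is a smooth curve, and the standard theory of determinantal hypersurfaces identifies it with the singular locus of $M$. Its degree is given by Thom--Porteous: $[C]=c_2(\CG)^2-c_1(\CG)c_3(\CG)$ for $\CG=\CQ^*\otimes\CO(1)$, and inserting $c_1(\CG)=4h$, $c_2(\CG)=7h^2$, $c_3(\CG)=6h^3$ (where $h=c_1(\CO(1))$) gives $[C]=25\,h^4$, so $\deg C=25$. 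For the genus I use that along the corank-$2$ locus the kernel $\CK=\ker(\mu|_C)$ and cokernel $\CC=\Coker(\mu|_C)$ are rank-$2$ bundles with normal bundle $N_{C/\PP^5}=\CK^*\otimes\CC$; the four-term sequence $0\to\CK\to A\otimes\CO_C\to\CQ^*(1)|_C\to\CC\to 0$ yields $\det\CC\otimes(\det\CK)^{-1}=\CO_C(4)$, whence
$$
\omega_C=\omega_{\PP^5}|_C\otimes\det N_{C/\PP^5}=\CO_C(-6)\otimes\big[(\det\CC)\otimes(\det\CK)^{-1}\big]^{2}=\CO_C(-6)\otimes\CO_C(8)=\CO_C(2).
$$
Therefore $2g(C)-2=\deg\CO_C(2)=2\deg C=50$ and $g(C)=26$.

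Finally I would analyse the two projections fibrewise. Over $[w]\in M\setminus C$ one has $\rank\mu_w=4$, so $\ker\mu_w$ and $(\Im\mu_w)^\perp/\langle w\rangle$ are one-dimensional and each projection is a local isomorphism; over a general point of $C$ one has $\rank\mu_w=3$ and the fibres become $\PP(\ker\mu_w)\cong\PP^1$ and $\PP((\Im\mu_w)^\perp/\langle w\rangle)\cong\PP^1$ respectively. Thus $\PP_{X(\alpha)}(\CE)\to M$ and $\PP_{Y(\alpha)}(E(\alpha))\to M$ are isomorphisms over $M\setminus C$ and contract a ruled surface over $C$; this exceptional locus has dimension $2$ in the smooth fourfolds $\PP_{X(\alpha)}(\CE)$ and $\PP_{Y(\alpha)}(E(\alpha))$, so both maps are small, and since $M$ is Gorenstein they are automatically crepant. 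Transverse to a general point of $C$ the corank-$2$ degeneration of $\mu$ makes $M$ acquire the threefold node $\{xy=zt\}$, whose two small resolutions are precisely the two rulings found above, so the induced birational map $\PP_{X(\alpha)}(\CE)\dashrightarrow\PP_{Y(\alpha)}(E(\alpha))$ is the relative Atiyah flop along $C$. The main obstacle is making this last paragraph rigorous for generic $\alpha$: one must verify that the two fourfolds are smooth and that $\mu$ degenerates transversally along $C$, so that the transverse singularity is genuinely the node $\{xy=zt\}$ and the two contractions are its standard small resolutions --- only then does one get a flop rather than a mere birational identification. The Chern-class and adjunction computations above, while the computational core, are by contrast entirely routine.
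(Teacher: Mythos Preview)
The paper does not prove this theorem: it is stated with a citation to \cite{K3} and no argument is given here, so there is no proof in the present paper to compare against. Your approach---organizing everything through the contraction map $\mu:A\otimes\CO_{\PP(W)}\to\CQ^*(1)$ and realizing both $\PP_{X(\alpha)}(\CE)$ and $\PP_{Y(\alpha)}(E(\alpha))$ as the two small resolutions of the resulting determinantal (da Palatini) quartic---is the standard and correct line of attack, and is in the spirit of the construction in \cite{K3}.

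Your computations are right: $\det(\CQ^*(1))=\CO(4)$ gives the quartic; the Thom--Porteous class $c_2(\CG)^2-c_1(\CG)c_3(\CG)=49h^4-24h^4=25h^4$ gives $\deg C=25$; and the normal-bundle identification $N_{C/\PP^5}\cong\CK^*\otimes\CC$ together with $\det\CC\otimes(\det\CK)^{-1}=\det\CG|_C=\CO_C(4)$ yields $\omega_C=\CO_C(2)$ and hence $g(C)=26$. The fibrewise description of the two contractions over $M\setminus C$ and over $C$ is also correct. You have honestly flagged the one genuine technical point, namely the transversality needed to ensure smoothness of the two fourfolds and the nodal transverse slice along $C$; for generic $\alpha$ this follows from a standard Bertini-type argument applied to the family of maps $\mu$, and is the content one would extract from \cite{K3}.
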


\begin{remark}
The hypersurface $M \subset \PP(W)$ is known as the {\em da Palatini quartic}.
The curve $C$ parameterizes lines on $X(\alpha)$ and at the same time jumping lines
for $E(\alpha)$ on $Y(\alpha)$ (that is lines $L \subset Y(\alpha)$ for which
$E(\alpha)_{|L} \cong \CO_L(1) \oplus \CO_L(-1)$).
\end{remark}

We expect that some generalization of this result should hold for other values of $d$.
For example, let $\CMFB^1_{4d+2}(t)$ be the moduli space of pairs $(X,F)$,
where $X$ is a Fano threefold of index $1$ and degree $4d + 2$, and $F$ is
a stable vector bundle on $X$ of rank $2$ with $c_1(F) = -H$, $c_2(F) = (d + 2 + t)L$
(note that for $t = 0$ by Theorem~\ref{muth} there is only one such bundle,
the exceptional bundle $\CE_2$, hence $\CMFB^1_{4d+2}(0) = \CMF^1_{4d+2}$).
Using the Riemann--Roch (see the proof of Proposition~\ref{rr})
one can check that for any $d$, any $t$
and any $(X,F) \in \CMF^1_{4d+2}(t)$ we have
$$
\dim H^0(X,F^*) = d + 3 - t,
$$
so that we have a map $\PP_{X}(F) \to \PP^{d+2-t}$. Moreover,
since $c_1(F^*) = H$, $c_2(F^*) = (d+2-t)L$,
it follows that the degree of the image of $\PP_{X}(F)$ in $\PP^{d+2-t}$ is
$$
\deg\PP_{X}(F) = c_1(F^*)^3 - 2c_1(F^*)c_2(F^*) = H^3 - 2(d+2-t)HL = (4d + 2) - 2(d+2-t) = 2d - 2 + 2t.
$$
Similarly, let
$\CMFI^1_{d}(k)$ be the moduli space of pairs $(Y,E)$,
where $Y$ is a Fano threefold of index $2$ and degree $d$, and $E$ is
an instanton bundle of charge $k$ on $Y$, that is
a stable vector bundle of rank $2$ with $c_1(E) = 0$, $c_2(E) = kL$ and $H^1(Y,E(-1)) = 0$
(see~\cite{K3}). Using the Riemann--Roch (see the proof of Proposition~\ref{rr})
one can check that for any $d$, any $k$ and any $(Y,E) \in \CMFI^2_d(k)$ we have
$$
\dim H^0(Y,E^*(1)) = 2d - 2k + 4,
$$
so that we have a map $\PP_{Y}(E) \to \PP^{2d-2k+3}$. Moreover,
since $c_1(E^*(1)) = 2H$, $c_2(E(1)) = H^2 + kL$,
it follows that the degree of the image of $\PP_{Y}(E)$ in $\PP^{2d-2k+3}$ is
$$
\deg\PP_{Y}(E) = c_1(E^*(1))^3 - 2c_1(E^*(1))c_2(E^*(1)) = 8H^3 - 4H(H^2 + kL) = 8d - 4(d+k) = 4d - 4k.
$$
Note that whenever $d + 1 = 2k - t$ the dimensions and the degrees coincide.


\begin{conjecture}
For each $1 \le d \le 5$ there are integers $k,t \ge 0$ satisfying $d + 1 = 2k - t$ for which
there is an isomorphism $\xi:\CMFI^2_d(k) \cong \CMFB^1_{4d+2}(t)$
such that for $(X,F) = \xi(Y,E)$ there is an isomorphism $h:H^0(Y,E^*(1)) \stackrel\sim\to H^0(X,F^*)$
and a birational isomorphism
$\theta:\xymatrix@1{\PP_{X}(F)\  \ar@{-->}[r] & \ \PP_{Y}(E)}$
such that the diagram
$$
\xymatrix{
\PP_{X}(F)\  \ar@{-->}[rr]^-\theta \ar[d] && \ \PP_{Y}(E) \ar[d] \\
\PP(H^0(X,F^*)^*) \ar[rr]^-h_-\cong && \PP(H^0(Y,E^*(1))^*)
}
$$
commutes. Moreover, there is an equivalence $\CA_X \cong \CB_Y$, that is $Z_d \subset \CMF^2_d\times\CMF^1_{4d+2}$
is the image of the graph of the isomorphism $\xi:\CMF^2_d(k) \to \CMFB^1_{4d+2}(t)$.
\end{conjecture}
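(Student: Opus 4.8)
The plan is to follow the template set by the $d=3$ case, Theorems~\ref{th3} and~\ref{d3geom}, and to read the whole correspondence as the geometric shadow of the expected derived equivalence. Fix $d$ and, as a first step, dispose of the numerics: using the Riemann--Roch computations carried out just above the statement, choose $k,t\ge 0$ with $d+1 = 2k-t$. This single relation simultaneously equalizes the dimensions of the two target projective spaces, $\PP^{d+2-t} = \PP^{2d-2k+3}$, and the degrees of the two images, so that both scrolls $\PP_X(F)$ and $\PP_Y(E)$ land in a common $\PP^N$ with images of equal degree. For $d=3$ one takes $(k,t)=(2,0)$, recovering the charge-$2$ instanton on $Y_3$ and the exceptional bundle $\CE=\CE_2$ on $X_{14}$; for each remaining $d$ one records the admissible pairs and singles out the geometrically simplest one.

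The heart of the argument is the construction of a \emph{common target}. I would look for a variety $M\subset\PP^N$ playing the role of the da Palatini quartic, onto which both $\PP_X(F)$ and $\PP_Y(E)$ map birationally; the two maps are given by the complete linear systems $|H^0(X,F^*)|$ and $|H^0(Y,E^*(1))|$, and the isomorphism $h$ is to be produced by identifying both spaces of sections with one fixed vector space together with its incidence geometry, exactly as $W$ and $W^*$ are matched in the Pfaffian picture for $\Gr(2,W)$ and $\Pf(W)$. Having built $M$, one must show that both maps are small contractions, isomorphisms away from a common curve $C$ --- the curve of lines on $X$, which should simultaneously be the curve of jumping lines of $E$ on $Y$ --- and that the induced birational map $\theta$ is a flop. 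Commutativity of the square then follows formally, since both contractions factor the same linear system through $h$.

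Once the flop $\theta$ is in place the derived statement is close to automatic: a flop between smooth projective varieties induces a Fourier--Mukai equivalence $\D^b(\PP_X(F))\cong\D^b(\PP_Y(E))$ (the Bondal--Orlov philosophy, realized here by an explicit kernel supported on the fibre product), and the two projective-bundle structures, combined with the semiorthogonal decompositions recalled above, let one strip off the $\PP^1$-factors and the exceptional objects $\CE,\CO_X$ on one side and $\CO_Y,\CO_Y(H)$ on the other, leaving precisely $\CA_X\cong\CB_Y$. In parallel, the moduli isomorphism $\xi:\CMFI^2_d(k)\cong\CMFB^1_{4d+2}(t)$ is obtained by running the same construction in families: the universal scroll over $\CMFI^2_d(k)$, its relative contraction, and the reconstruction of $(X,F)$ out of $(Y,E)$ should all be functorial, yielding a morphism in each direction whose composites are the identity.

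The step I expect to be the real obstacle is the uniform construction of the target $M$ together with the moduli isomorphism $\xi$. For $d=3$ this rests on the very special linear-algebraic duality between $\Gr(2,W)$ and the Pfaffian cubic $\Pf(W)$, equivalently on homological projective duality for that pair; no equally transparent duality is available for the homogeneous spaces governing the other degrees ($\Gr(2,5)$ and $\Gr(3,7)$ for $d=5$, the pencil of quadrics and $\GTGr(2,7)$ for $d=4$, and the Lagrangian Grassmannians $\SGr$, $\OGr$ for the smaller degrees). Producing the scroll $M$, its singular curve $C$, and the flop therefore demands genuinely case-by-case geometry, and I expect the low-degree cases $d=1,2$ --- where even the bare equivalence of Conjecture~\ref{mainc} is open --- to resist this approach, since there the bundles $E$ and $F$ become less rigid and the indeterminacy locus $C$ correspondingly harder to control.
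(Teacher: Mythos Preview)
The statement you are attempting to prove is stated in the paper as a \emph{conjecture}, not a theorem; the paper offers no proof. The only follow-up is a brief remark recording that $(k,t)=(2,0)$ works for $d=3$ by Theorem~\ref{d3geom} and that the author \emph{expects} $(k,t)=(4,2)$ to work for $d=5$. There is therefore no ``paper's own proof'' to compare against.

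Your proposal is accordingly not a proof but a research outline, and you recognise this yourself: the construction of the common target $M$, the verification that both maps are small contractions along a common curve $C$, and the moduli isomorphism $\xi$ are all stated as things one would \emph{like} to do, not things you have done. Your plan is reasonable and closely mirrors the $d=3$ template the paper itself uses as motivation, and your identification of the main obstacle --- the absence of a Pfaffian-type duality for the other homogeneous models --- is exactly right. But none of the substantive steps (existence of $M$, smallness of the contractions, the flop, functoriality in families) is carried out, so this cannot be accepted as a proof. It is a plausible strategy for attacking an open conjecture, which is precisely how the paper presents the statement.
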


\begin{remark}
For $d = 3$ by Theorem~\ref{d3geom} we should take $k = 2$, $t = 0$.
For $d = 5$ we expect $k =4$, $t = 2$ will work.
\end{remark}



\section{Appendix. The Grothedieck group and algebraic cycles}

Let $X$ be a smooth projective variety of dimension $n$.
Let $A^p(X)$ denote the group of algebraic cycles on $X$ of codimension $p$ modulo rational equivalence.
Let $A^\bullet(X) = \oplus_{p=0}^n A^p(X)$ be the Chow ring. Let $K_0(X)$ be the Grothendieck group of the category
of coherent sheaves on $X$ (equivalently, of the derived category $\D^b(X)$).
Consider the Chern character map $\ch:K_0(X) \to A^\bullet(X)\otimes\QQ$.
It is well known that $\ch$ induces an isomorphism of $\QQ$-vector
spaces $K_0(X)\otimes\QQ \to A^\bullet(X)\otimes\QQ$.

On the other hand, consider on both sides the numerical equivalence.
Recall that an algebraic cycle $a \in A^p(X)$ is numerically equivalent to zero,
if it lies in the kernel of the bilinear intersection form:
$$
\xymatrix@1{A^\bullet(X)\otimes A^\bullet(X) \ar[r]^-\cdot & A^\bullet(X) \ar[r]^-{\mathrm{pr}} & A^n(X) \ar[r]^-\deg & \ZZ}.
$$
In other words, if its intersection with any cycle in $A^{n-p}(X)$ is zero.
Let $A^\bullet(X)_{num} = \oplus A^p(X)_{num}$ be the ring of algebraic cycles
modulo the numerical equivalence. Note that any torsion class in $A^\bullet(X)$ is numerically
trivial, hence $A^\bullet(X)_{num}$ is torsion free.

Similarly, a class $v \in K_0(X)$ is numerically equivalent to zero,
if it lies in the kernel of the Euler bilinear form:
$$
\chi:\xymatrix@1{K_0(X) \otimes K_0(X) \ar[r] & \ZZ},\qquad
\chi([F],[G]) = \sum_i (-1)^i\dim\Ext^i(F,G).
$$
Let $K_0(X)_{num} := K_0(X)/\Ker\chi$ be the numerical Grothendieck group.

The Riemann-Roch formula shows that the kernel of the Euler form
coinsides with the preimage under the Chern character map of the subring of $A^\bullet(X)\otimes\QQ$
consisting of numerically trivial algebraic cycles. It follows that $\ch$ descends to a map
$K_0(X)_{num} \to A^\bullet(X)_{num} \otimes\QQ$ which we denote by $\ch$ as well,
and induces an isomorphism of $\QQ$-vector spaces $K_0(X)_{num}\otimes\QQ \cong A^\bullet(X)_{num}\otimes\QQ$.

For any $p$-cycle $Z = \sum a_iS_i$ we define $[\CO_Z] := \sum a_i[\CO_{S_i}] \in K_0(X)$.
Note that $[\CO_Z]$ really depends on the cycle $Z$, not only on its rational or numerical equivalence class.
A little bit later we will show how one can get rid of this dependance (see Remark~\ref{grpoz}).

\begin{definition}\label{AK}
We will say that a smooth projective $n$-dimensional variety $X$ is {\sf AK-compatible}\/
if for any collection of cycles $Z_p^i$ on $X$, $0 \le p \le n$, $1 \le i \le m_p$
such that $\codim Z_p^i = p$ and $\{Z^p_1,Z^p_2,\dots,Z^p_{m_p}\}$ is a basis in $A^p(X)_{num}$
the classes $[\CO_{Z^p_i}]$, $0 \le p\le n$, $1 \le i \le m_p$ form a $\ZZ$-basis in $K_0(X)_{num}$.
\end{definition}

If an algebraic variety $X$ is AK-compatible, then one can easily describe its numerical
Grothendieck group by choosing some bases in the groups of algebraic cycles and considering
their structure sheaves. Certainly such a description may be useful in many cases.
The goal of this section is to find some easily verifiable criterion for AK-compatibility.


We start with some preparations.
Consider the following two filtrations on $K_0(X)_{num}$.
The first one is induced by the codimension filtration on $A^\bullet(X)$:
$$
F^pK_0(X)_{num} = \ch^{-1}(\oplus_{q=p}^n A^q(X)_{num}\otimes\QQ).
$$
The second one is induced by the codimension of support:
$$
S^pK_0(X)_{num} = \langle\ [G]\ |\ \codim\supp(G) \ge p\ \rangle,
$$
where $\langle\quad\rangle$ stands for the linear span.
We will call the filtration $F^\bullet$ the {\sf induced filtration},
and $S^\bullet$ the {\sf support filtration}.
Let $\gr_F^p K_0(X)_{num}$ and $\gr_S^p K_0(X)_{num}$ be the graded factors of these filtrations.

Note that for any $G \in \D^b(X)$ with $\codim\supp(G) \ge p$ we have $\ch(G) \in \oplus_{q \ge p} A^q(X)_{num}\otimes\QQ$,
hence $S^pK_0(X)_{num} \subset F^pK_0(X)_{num}$ and we have the following commutative diagram
$$
\xymatrix{
S^pK_0(X)_{num} \ar@{^{(}->}[r] &
F^pK_0(X)_{num} \ar[r]^-\ch &
\oplus_{q \ge p} A^q(X)_{num}\otimes\QQ \\
S^{p+1}K_0(X)_{num} \ar@{^{(}->}[r] \ar@{^{(}->}[u] &
F^{p+1}K_0(X)_{num} \ar[r]^-\ch \ar@{^{(}->}[u] &
\oplus_{q \ge p + 1} A^q(X)_{num}\otimes\QQ  \ar@{^{(}->}[u]
}
$$
Passing to the graded factors we obtain a chain of maps
$$
\xymatrix@1{
\gr^p_SK_0(X)_{num}\ \ar[rr]^{i_p} && \ \gr^p_FK_0(X)_{num}\ \ar[rr]^-{\ch_p} && \ A^p(X)_{num}\otimes\QQ
}
$$
Here $\ch_p$ is the $p$-th coefficient of the Chern character and $i_p$ is the map induced
by the identity map of $K_0(X)_{num}$. Note that it follows that
$\ch_p:\gr^p_FK_0(X)_{num}\otimes\QQ \to A^p(X)_{num}\otimes\QQ$ is an isomorphism.
Let us show that the inverse map is defined over $\ZZ$.

\begin{lemma}
There exists a linear map $\SO_p:A^p(X)_{num} \to \gr_F^p K_0(X)_{num}$,
which is inverse to $\ch_p$. Moreover, $\SO_p(Z) = [\CO_Z] \bmod F^{p+1}K_0(X)_{num}$.
\end{lemma}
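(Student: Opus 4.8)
The plan is to take the formula in the statement as the \emph{definition} of $\SO_p$ and then check that it is well defined, linear, and inverse to $\ch_p$ over $\QQ$. First I would work with honest cycles. For a codimension $p$ cycle $Z=\sum_i a_iS_i$ with the $S_i$ integral I claim that $[\CO_Z]=\sum_i a_i[\CO_{S_i}]$ lies in $F^pK_0(X)_{num}$. Indeed each sheaf $\CO_{S_i}$ has support of codimension $p$, so by compatibility of the Chern character with the codimension of support its components $\ch_q(\CO_{S_i})$ vanish for $q<p$; hence $\ch([\CO_Z])\in\oplus_{q\ge p}A^q(X)_{num}\otimes\QQ$, which is exactly the condition $[\CO_Z]\in F^pK_0(X)_{num}$. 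Reducing modulo $F^{p+1}$ therefore gives a homomorphism $\Phi\colon Z^p(X)\to\gr^p_FK_0(X)_{num}$ from the group $Z^p(X)$ of codimension $p$ cycles, $Z\mapsto[\CO_Z]\bmod F^{p+1}$; it is linear because $Z\mapsto[\CO_Z]$ is linear by definition.

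The key computation is the value of $\ch_p$ on $\Phi(Z)$, and here I would invoke the Riemann--Roch description of the leading term of the Chern character: for a coherent sheaf $\CF$ with $\codim\supp\CF\ge p$ the lowest nonvanishing component of $\ch(\CF)$ lies in codimension $p$ and equals the fundamental cycle of $\CF$. For $S$ smooth this is immediate from $\ch(i_*\CO_S)=i_*(\mathrm{Td}(N_{S/X})^{-1})$, whose leading term is $i_*(1)=[S]$; in general it is the standard comparison of $\ch$ with the filtration by codimension of support. Applied to $\CF=\CO_{S_i}$ it yields $\ch_p(\CO_{S_i})=[S_i]$, so that $\ch_p(\Phi(Z))=[Z]$, the class of the cycle $Z$ in $A^p(X)_{num}$, regarded inside $A^p(X)_{num}\otimes\QQ$ via the inclusion (which is injective since $A^\bullet(X)_{num}$ is torsion free).

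It remains to descend $\Phi$ to numerical equivalence and to identify the result with the inverse of $\ch_p$. The crucial point is that $\ch_p$ is already injective on $\gr^p_FK_0(X)_{num}$: writing $F^p=\Ker\big(K_0(X)_{num}\to\oplus_{q<p}A^q(X)_{num}\otimes\QQ\big)$ and $F^{p+1}=\Ker(\ch_p|_{F^p})$, the map $\ch_p$ identifies $\gr^p_FK_0(X)_{num}=F^p/F^{p+1}$ with $\Im(\ch_p|_{F^p})\subset A^p(X)_{num}\otimes\QQ$; in particular $\gr^p_FK_0(X)_{num}$ is torsion free. Now if a cycle $Z$ is numerically trivial then $\ch_p(\Phi(Z))=[Z]=0$, whence $\Phi(Z)=0$ by injectivity. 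Thus $\Phi$ factors through the quotient $A^p(X)_{num}=Z^p(X)/(\text{numerical equivalence})$ and defines the required linear map $\SO_p\colon A^p(X)_{num}\to\gr^p_FK_0(X)_{num}$ with $\SO_p(Z)=[\CO_Z]\bmod F^{p+1}$. By construction $\ch_p(\SO_p(Z))=Z$ for every $Z$; tensoring with $\QQ$ and using that $\ch_p\otimes\QQ$ is an isomorphism then forces $\SO_p\otimes\QQ=(\ch_p\otimes\QQ)^{-1}$, i.e.\ $\SO_p$ is precisely the integral lift of the inverse of $\ch_p$.

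The only genuine input is the Riemann--Roch leading-term identity $\ch_p(\CO_S)=[S]$; everything else is formal manipulation of the two filtrations. I expect the subtle point to be the descent to \emph{numerical} rather than merely rational equivalence, which is exactly where the torsion-freeness of $\gr^p_FK_0(X)_{num}$, and hence the integral injectivity of $\ch_p$, enters. One should also keep in mind that $\ch_p$ need not map $\gr^p_FK_0(X)_{num}$ \emph{into} the integral lattice $A^p(X)_{num}$, so $\SO_p$ is to be read as the inverse of $\ch_p\otimes\QQ$ that happens to be defined over $\ZZ$ on $A^p(X)_{num}$, rather than as a two-sided integral inverse.
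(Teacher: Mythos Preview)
Your proposal is correct and follows essentially the same route as the paper: define $\SO_p$ by sending a cycle $Z$ to $[\CO_Z]\bmod F^{p+1}$, use the leading-term formula $\ch(\CO_Z)=Z+(\text{higher terms})$ (the paper's $(\dagger)$) to get $\ch_p\circ\SO_p=\id$, and invoke injectivity of $\ch_p$ on $\gr^p_FK_0(X)_{num}$ to conclude well-definedness. You spell out a few points the paper leaves implicit---why $\ch_p$ is injective (because $F^{p+1}=\Ker(\ch_p|_{F^p})$ by definition), the descent from cycles to numerical classes, and the caveat that ``inverse'' means $\ch_p\circ\SO_p=\id$ rather than a two-sided integral inverse---but the argument is the same.
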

\begin{proof}
For each $p$-cycle $Z$ on $X$ define $\SO_p(Z)$ as the image of the class of its structure sheaf in $\gr_F^p K_0(X)_{num}$.
Since
$$
\ch(\CO_Z) = Z + \text{terms of degree larger than $p$},\eqno{(\dagger)}
$$
we have $[\CO_Z] \in F^pK_0(X)_{num}$ and $\ch_p(\SO_p(Z)) = Z$.
Since $\ch_p$ is injective, it follows that $\SO_p$ is correctly defined
and $\ch_p\circ\SO_p = \id$.
\end{proof}

\begin{remark}\label{grpoz}
As we see from this Lemma the class of $[\CO_Z]$ in $\gr^p_FK_0(X)_{num}$ only depends
on the numerical class of $Z$.
\end{remark}

Further, it is easy to see that for any coherent sheaf $G$ supported in codimension $p$
the $p$-th coefficient of the Chern character is integer, $\ch_p(G) \in A^p(X)_{num}$.
Indeed, if $Z_1,\dots,Z_m$ are codimension $p$ components of $\supp G$ and $\ell_i$ is the length
of $G$ at generic point of $Z_i$ then $\ch_p(G) = \sum\ell_iZ_i$. Moreover, the same argument
shows that the map $\ch_p:\gr^p_SK_0(X)_{num} \to A^p(X)_{num}$ is surjective.
It follows that we have the following commutative diagram
$$
\xymatrix{
A^p(X)_{num} \ar@{^{(}->}[rr] \ar@{^{(}->}[drr]^{\SO_p} && A^p(X)_{num}\otimes\QQ \\
\gr^p_SK_0(X)_{num} \ar[rr]^{i_p} \ar@{->>}[u]^{\ch_p} && \gr^p_FK_0(X)_{num} \ar@{^{(}->}[u]^{\ch_p}
}
$$

\begin{proposition}\label{equiv}
The following properties for a smooth projective variety $X$ are equivalent:

\rnc{\theenumi}{\roman{enumi}}
\begin{enumerate}
\item
$\forall\ 0 \le p \le n$ $\SO_p$ is an isomorphism;
\item
$X$ is AK-compatible;
\item
$\forall\ 0 \le p \le n$ $S^pK_0(X)_{num} = F^pK_0(X)_{num}$;
\item
$\forall\ 0 \le p \le n$ $\ch_p(\gr^p_FK_0(X)_{num}) \subset A^p(X)_{num}$.
\end{enumerate}
\end{proposition}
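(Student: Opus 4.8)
The plan is to organize everything around the comparison, for each fixed $p$, of the three maps already assembled in the excerpt: the surjection $\ch_p:\gr^p_S K_0(X)_{num}\twoheadrightarrow A^p(X)_{num}$, the map $i_p:\gr^p_S K_0(X)_{num}\to\gr^p_F K_0(X)_{num}$, and the rational inclusion $\ch_p:\gr^p_F K_0(X)_{num}\to A^p(X)_{num}\otimes\QQ$. First I would record two structural facts. The map $\ch_p$ on $\gr^p_F K_0(X)_{num}$ is injective over $\ZZ$: if $\ch_p(v)=0$ for $v\in F^p K_0(X)_{num}$ then $\ch(v)\in\oplus_{q\ge p+1}A^q(X)_{num}\otimes\QQ$, so $v\in F^{p+1}K_0(X)_{num}$. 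Consequently $\SO_p$, which satisfies $\ch_p\circ\SO_p=\id$, is injective and identifies $A^p(X)_{num}$ with a sublattice of $\ch_p(\gr^p_F K_0(X)_{num})$, giving a chain of full-rank lattices $A^p(X)_{num}\subseteq\ch_p(\gr^p_F K_0(X)_{num})\subseteq A^p(X)_{num}\otimes\QQ$. Secondly, $F^pK_0(X)_{num}$ is saturated in $K_0(X)_{num}$ (again directly from $F^p=\ch^{-1}(\oplus_{q\ge p})$), so each $\gr^p_F K_0(X)_{num}$ is free of rank $\rank A^p(X)_{num}=:m_p$.

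With this in hand, (i)$\Leftrightarrow$(iv) should be immediate: $\SO_p$ is an isomorphism iff it is surjective (it is always injective) iff $\ch_p(\gr^p_F K_0(X)_{num})=A^p(X)_{num}$; and since the inclusion $A^p(X)_{num}\subseteq\ch_p(\gr^p_F K_0(X)_{num})$ always holds, this equality is equivalent to the reverse inclusion, which is (iv). For (iii)$\Leftrightarrow$(iv) I would work one graded piece at a time through $i_p$, using the commutative triangle relating the two versions of $\ch_p$. Surjectivity of $i_p$ forces $\ch_p(\gr^p_F K_0(X)_{num})=\ch_p(\gr^p_S K_0(X)_{num})=A^p(X)_{num}$, i.e. (iv) in degree $p$; conversely, given (iv) in degree $p$, any class in $\gr^p_F K_0(X)_{num}$ has Chern character in $A^p(X)_{num}=\ch_p(\gr^p_S K_0(X)_{num})$, and injectivity of $\ch_p$ on $\gr^p_F K_0(X)_{num}$ lifts it back through $i_p$, so $i_p$ is surjective. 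A short descending induction on $p$ (base case $S^{n+1}=F^{n+1}=0$, inductive step using $S^{p+1}=F^{p+1}$ together with surjectivity of $i_p$) then upgrades ``$i_p$ surjective for all $p$'' to the equality of filtrations (iii).

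For (i)$\Rightarrow$(ii) I would invoke the standard splitting of a finite filtration of a free module with free graded quotients. Choosing cycles $Z^p_i$ representing bases of the $A^p(X)_{num}$, each $[\CO_{Z^p_i}]$ lies in $F^pK_0(X)_{num}$ and reduces mod $F^{p+1}K_0(X)_{num}$ to $\SO_p(Z^p_i)$; if all $\SO_p$ are isomorphisms these reductions form bases of the $\gr^p_F K_0(X)_{num}$, so lifting them through the split sequences $0\to F^{p+1}\to F^p\to\gr^p_F\to 0$ (descending in $p$) exhibits the whole family $\{[\CO_{Z^p_i}]\}$ as a $\ZZ$-basis of $K_0(X)_{num}$, which is exactly AK-compatibility.

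The converse (ii)$\Rightarrow$(i) is the step I expect to be the main obstacle, since here I must extract filtration-level information from a statement about global bases. Assuming $X$ is AK-compatible, I would fix cycle bases $Z^p_i$ and set $L^p$ to be the subgroup of $K_0(X)_{num}$ generated by those $[\CO_{Z^q_i}]$ with $q\ge p$. Being spanned by part of a $\ZZ$-basis, $L^p$ is a direct summand, hence saturated, of rank $\sum_{q\ge p}m_q$; moreover $L^p\subseteq F^pK_0(X)_{num}$, which is itself saturated of the \emph{same} rank $\sum_{q\ge p}m_q$. A saturated subgroup contained in another of equal rank must coincide with it, so $L^p=F^pK_0(X)_{num}$. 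Passing to $\gr^p_F K_0(X)_{num}$, the generators with $q>p$ die (they lie in $F^{p+1}$), leaving $\{\SO_p(Z^p_i)\}_i$ as a generating set; being also linearly independent (the image of a basis under the injective $\SO_p$), it is a basis, so $\SO_p$ is surjective and hence an isomorphism. I would verify carefully only the two saturation claims and the rank count $\rank F^pK_0(X)_{num}=\sum_{q\ge p}m_q$, as these carry the entire force of the implication.
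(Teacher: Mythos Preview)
Your proof is correct. The organization differs from the paper's: the paper runs the single cycle (i)$\Rightarrow$(ii)$\Rightarrow$(iii)$\Rightarrow$(iv)$\Rightarrow$(i), whereas you establish (i)$\Leftrightarrow$(iv) and (iii)$\Leftrightarrow$(iv) directly and then close the loop with (i)$\Leftrightarrow$(ii). The only substantive methodological difference is in the step extracting filtration information from AK-compatibility. The paper proves (ii)$\Rightarrow$(iii) by an explicit Chern-character argument: writing $v\in F^pK_0(X)_{num}$ in the basis $\{[\CO_{Z^q_i}]\}$ and applying $\ch_q$ for the minimal $q$ with a nonzero coefficient forces $q\ge p$, so $v\in S^pK_0(X)_{num}$. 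You instead prove (ii)$\Rightarrow$(i) by a lattice argument: the span $L^p$ of the basis vectors with $q\ge p$ is a saturated subgroup of $K_0(X)_{num}$ of rank $\sum_{q\ge p}m_q$, contained in the saturated subgroup $F^pK_0(X)_{num}$ of the same rank, hence equal to it. Both routes in effect establish $L^p=F^pK_0(X)_{num}$; the paper's is slightly more hands-on, yours is cleaner abstractly and isolates exactly which structural facts (saturation of $F^p$, freeness of $K_0(X)_{num}$, the rank computation) are doing the work. The remaining implications are handled in essentially the same way in both arguments.
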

\begin{proof}
(i) $\Rightarrow$ (ii):
An evident induction argument shows that $\{[\CO_{Z_q^i}]\}_{q \ge p}^{1\le i\le m_q}$ is a basis in $F^pK_0(X)_{num}$.

(ii) $\Rightarrow$ (iii):
Assume that $X$ is AK-compatible. Choose bases in all $A^p(X)_{num}$ as in Definition~\ref{AK}
and assume that $v = \sum a_q^i[\CO_{Z_q^i}] \in F^pK_0(X)_{num}$ for some $a_q^i \in \ZZ$.
Let $q$ be the minimal integer such that $a_q^i \ne 0$ for some $i$ and assume that $q < p$.
Then applying $\ch_q$ we see that
$$
0 = \ch_q(v) =
\sum_i a_q^i \ch_q([\CO_{Z_q^i}]) =
\sum_i a_q^i \ch_q(\SO_q(Z_q^i)) =
\sum_i a_q^i Z_q^i
$$
which implies $a_q^i = 0$ for all $i$. So, it follows that $q \ge p$, hence
$a_q^i = 0$ for all $q < p$. But then it is clear that $v \in S^pK_0(X)_{num}$.
So we see that $F^pK_0(X)_{num} \subset S^pK_0(X)_{num}$.


(iii) $\Rightarrow$ (iv):
If $S^pK_0(X)_{num} = F^pK_0(X)_{num}$ then $\gr^p_SK_0(X)_{num} = \gr^p_FK_0(X)_{num}$ for all $p$,
hence $\ch_p(\gr^p_FK_0(X)_{num}) = \ch_p(\gr^p_SK_0(X)_{num}) \subset A^p(X)_{num}$.

(iv) $\Rightarrow$ (i):
Since $\ch_p$ and $\SO_p$ are mutually inverse isomorphisms of $\gr^p_FK_0(X)_{num}\otimes\QQ$ and
$A^p(X)_{num}\otimes\QQ$ and preserve lattices $\gr^p_FK_0(X)_{num}$ and $A^p(X)_{num}$,
it follows that they induce isomorphisms of these lattices.
\end{proof}


%


Our next goal is to give several sufficient conditions for AK-compatibility.

\begin{lemma}\label{ch012n}
For any smooth projective variety $X$ we have $\ch_p(\gr^p_FK_0(X)_{num}) \subset A^p(X)_{num}$ for $p=0,1,2$ and $p=n$.
\end{lemma}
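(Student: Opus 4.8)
The plan is to verify the inclusion $\ch_p(\gr^p_FK_0(X)_{num}) \subset A^p(X)_{num}$ separately for the four values $p = 0, 1, 2, n$, each by exhibiting, for a class $v \in F^pK_0(X)_{num}$, an actual \emph{coherent sheaf} (or virtual class represented by structure sheaves of cycles) whose $p$-th Chern character realizes $\ch_p(v)$ as an integral cycle. By the remark preceding Lemma~\ref{ch012n}, it suffices to show that $\ch_p$ lands in the integral lattice $A^p(X)_{num}$; equivalently, by the commutative diagram just established, it suffices to show that $i_p:\gr^p_SK_0(X)_{num}\to\gr^p_FK_0(X)_{num}$ is surjective, since $\ch_p$ is already known to be integral on the support-graded piece.

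First I would dispose of the extreme cases. For $p=0$, the class $\ch_0(v)$ is the rank, which is manifestly an integer, so $A^0(X)_{num} = \ZZ$ receives it; the map $\SO_0$ sends the fundamental class to $[\CO_X]$. For $p=n$, a class in $F^nK_0(X)_{num}$ is represented by a complex supported in dimension $0$, whose Chern character is concentrated in top degree and equals $\sum \ell_i P_i$ with $\ell_i$ the lengths at the supported points; this is integral since $A^n(X)_{num} = \ZZ\cdot P$ and lengths are integers. The genuinely interesting cases are $p=1$ and $p=2$.

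For $p=1$, I would use that $F^1K_0(X)_{num}$ consists of classes of rank zero. Given such a class $v$, its first Chern class $\ch_1(v) = c_1(v)$ is a divisor class, hence automatically lies in $A^1(X)_{num} = \Pic(X)/(\text{num}) $, which is integral. More structurally, a rank-zero virtual sheaf can be written as a difference of structure sheaves of effective divisors (using that any line bundle is $\CO(D) - \CO(D')$ for effective $D,D'$), and $\ch_1(\CO_D) = D$ is an integral codimension-one cycle by formula~$(\dagger)$. Thus $i_1$ is surjective and integrality follows.

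For $p=2$, which I expect to be the main obstacle, the point is that a class $v \in F^2K_0(X)_{num}$ has both rank and $\ch_1$ numerically trivial, and one must show its second Chern character $\ch_2(v)$ is an integral codimension-two cycle rather than merely a rational one. The cleanest route is to reduce $v$ modulo $F^3$ to a combination of classes of the form $[\CO_W]$ for codimension-two subvarieties $W$ together with classes $[\CF]$ of torsion-free sheaves supported in codimension $\ge 2$; for the latter, the argument in the paragraph before the Lemma gives $\ch_2(\CF) = \sum \ell_i W_i \in A^2(X)_{num}$ directly. The delicate step is showing that \emph{every} class in $\gr^2_F$ is hit by $\gr^2_S$: here I would invoke the standard fact that for a virtual sheaf of rank zero with $c_1 = 0$, the class $\ch_2 = -c_2$ is represented (after correcting by lower terms, which vanish here) by an honest codimension-two cycle, using resolution by vector bundles and the Whitney formula to see that all the half-integers and $1/12$-type denominators appearing in the Chern character cancel once rank and $c_1$ are trivial. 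Concretely, $\ch_2 = \tfrac12(c_1^2 - 2c_2)$ collapses to $-c_2$, and $c_2$ of a coherent sheaf supported in codimension $\ge 2$ is integral. The surjectivity of $i_2$ then yields $\ch_2(\gr^2_FK_0(X)_{num}) = \ch_2(\gr^2_SK_0(X)_{num}) \subset A^2(X)_{num}$, completing the proof.
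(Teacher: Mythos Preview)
Your argument for $p=n$ is circular. You assert that a class in $F^nK_0(X)_{num}$ ``is represented by a complex supported in dimension $0$,'' but the filtration $F^\bullet$ is defined purely via the Chern character, not via support; knowing that $v\in F^n$ only tells you $\ch(v)$ is concentrated in top degree, not that $v$ comes from $S^n$. Indeed, the equality $S^n=F^n$ is precisely (a special case of) condition~(iii) in Proposition~\ref{equiv}, which the present Lemma is meant to feed into. The paper's argument avoids this by a direct Riemann--Roch computation: for $v\in F^nK_0(X)_{num}$ one has $\ch(v)=\ch_n(v)$, so
\[
\chi(\CO_X,v)=\int_X \ch(v)\cdot\mathrm{td}(X)=\int_X \ch_n(v)=\deg\ch_n(v),
\]
and the left-hand side is an integer by definition of $\chi$.

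For $p=2$ you do reach the key identity $\ch_2=\tfrac12 c_1^2-c_2=-c_2$ when $c_1=0$, which is exactly the paper's argument. But the surrounding strategy of proving $i_2$ surjective is unnecessary and, as written, contains a slip: at the end you conclude ``$c_2$ of a coherent sheaf supported in codimension $\ge 2$ is integral,'' yet a general $v\in F^2$ need not be supported in codimension $\ge 2$ (again, that would be $v\in S^2$). What you actually need, and what suffices, is that $c_2(v)\in A^2(X)$ for \emph{every} class $v\in K_0(X)$, which follows from the Whitney formula since $c(E)^{-1}$ has integral coefficients for any vector bundle $E$. Once you use this, the detour through $i_2$ and support considerations can be dropped entirely. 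Your treatment of $p=0$ and $p=1$ is fine and matches the paper.
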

\begin{proof}
Note that $\ch_0(G)$ is the rank of $G$ and $\ch_1(G) = c_1(G)$, which implies the claim for $p=0$ and $p=1$.
For $p=2$ we have $\ch_2(G) = c_1(G)^2/2 - c_2(G)$, so if $c_1(G) = \ch_1(G) = 0$, then $\ch_2(G) = -c_2(G) \in A^2(X)_{num}$.
Finally, if $G \in F^nK_0(X)_{num}$ then by Riemann--Roch we have $\ch_n(X) = \chi(\CO_X,G) \in \ZZ$,
where we have identified $A^n(X)_{num}$ with $\ZZ$ via the degree map. This proves the claim for $p=n$.
\end{proof}

\begin{corollary}\label{ak3}
If $\dim X \le 3$ then $X$ is AK-compatible.
\end{corollary}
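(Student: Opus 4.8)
The plan is to obtain this as an immediate consequence of the two preceding results, rather than by any fresh computation. By Proposition~\ref{equiv}, $X$ is AK-compatible as soon as condition (iv) holds, namely that
$$
\ch_p(\gr^p_F K_0(X)_{num}) \subset A^p(X)_{num}
$$
for every $p$ in the range $0 \le p \le n$, where $n = \dim X$. So the whole task reduces to checking this inclusion for all relevant codimensions $p$, and then invoking the equivalence (iv) $\Leftrightarrow$ (ii) of that Proposition.

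The key step is to notice that Lemma~\ref{ch012n} already supplies precisely this inclusion, and does so for \emph{any} smooth projective variety, at the four values $p = 0,1,2$ and $p = n$. The only remaining content is therefore the purely combinatorial observation that, once $n \le 3$, these four values exhaust the entire range $0 \le p \le n$. Concretely, I would argue by cases: if $n \le 2$ then $\{0,1,\dots,n\} \subseteq \{0,1,2\}$, whereas if $n = 3$ then $\{0,1,2,3\} = \{0,1,2\} \cup \{n\}$. In either case every $p$ with $0 \le p \le n$ lies in $\{0,1,2,n\}$, so Lemma~\ref{ch012n} verifies condition (iv) for all of them at once, and Proposition~\ref{equiv} then gives AK-compatibility.

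Because every step is immediate, there is no genuine obstacle here; the argument is bookkeeping. The one place worth flagging is the top codimension in the case $n = 3$: the inclusion for $p = 3$ is \emph{not} covered by the generic $p = 0,1,2$ part of Lemma~\ref{ch012n} but only by its special $p = n$ clause, which rests on the Riemann--Roch identity $\ch_n(G) = \chi(\CO_X,G) \in \ZZ$. This is exactly the phenomenon that fails to generalize beyond dimension $3$, since for $n \ge 4$ the intermediate codimension $p = 3$ (strictly between $2$ and $n$) escapes all four clauses, which is why the hypothesis $\dim X \le 3$ is what makes the simple counting argument close.
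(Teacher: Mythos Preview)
Your proposal is correct and matches the paper's intended argument exactly: the corollary is stated without proof precisely because it follows immediately from Lemma~\ref{ch012n} together with condition~(iv) of Proposition~\ref{equiv}, via the observation that $\{0,1,2,n\}$ covers all codimensions when $n\le 3$. Your additional remarks about why the argument breaks for $n\ge 4$ are accurate and anticipate the paper's subsequent Remark.
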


Another approach to AK-compatibility is given by the following

\begin{lemma}\label{k0}
Assume that the intersection pairing $A^p(X)_{num} \otimes A^{n-p}(X)_{num} \to \ZZ$
induces an isomorphism $A^p(X)_{num} \to A^{n-p}(X)_{num}^*$. Then the map
$\SO_p:A^p(X)_{num} \to \gr_F^p K_0(X)_{num}$ is an isomorphism.
In particular, if the intersection pairing $A^p(X)_{num} \otimes A^{n-p}(X)_{num} \to \ZZ$
induces an isomorphism $A^p(X)_{num} \to A^{n-p}(X)_{num}^*$ for all $p$ then $X$ is AK-compatible.
%
\end{lemma}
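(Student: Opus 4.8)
The plan is to reduce the assertion to the integrality statement (iv) of Proposition~\ref{equiv} for the single index $p$, namely $\ch_p(\gr^p_F K_0(X)_{num}) \subset A^p(X)_{num}$, and then to verify that integrality by pairing with structure sheaves of cycles via Riemann--Roch. Recall that $\ch_p$ and $\SO_p$ are already mutually inverse after $\otimes\QQ$ and satisfy $\ch_p\circ\SO_p = \id$; in particular $\SO_p$ is injective, and once we know that $\ch_p$ carries the lattice $\gr^p_F K_0(X)_{num}$ into $A^p(X)_{num}$, the two maps become mutually inverse isomorphisms of lattices, so $\SO_p$ will be an isomorphism. Thus everything hinges on the integrality of $\ch_p(G)$ for $G$ lifting a class of $\gr^p_F K_0(X)_{num}$.

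To prove that integrality, I would fix $G \in F^p K_0(X)_{num}$; since $\chi$ and $\ch$ both descend to $K_0(X)_{num}$, the Euler pairing $\chi([\CO_Z],G)$ is a well-defined integer for every cycle $Z$. Taking $Z$ of codimension $n-p$ with numerical class $W=[Z]\in A^{n-p}(X)_{num}$ and expanding by Hirzebruch--Riemann--Roch,
\[
\chi([\CO_Z],G) = \deg\bigl(\ch([\CO_Z])^\vee\cdot\ch(G)\cdot\mathrm{td}_X\bigr),
\]
where $\deg$ reads off the codimension-$n$ component. Now $\ch([\CO_Z])$ has vanishing rank and begins in codimension $n-p$ with leading term $[Z]$, the class $\ch(G)$ has no component of codimension $<p$ because $G\in F^p K_0(X)_{num}$, and $\mathrm{td}_X$ has constant term $1$. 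Hence the only term of the product reaching codimension $n$ is the diagonal one, giving
\[
\chi([\CO_Z],G) = (-1)^{n-p}\deg\bigl([Z]\cdot\ch_p(G)\bigr).
\]
Since the left-hand side is an integer and every class of $A^{n-p}(X)_{num}$ is a $\ZZ$-linear combination of such $[Z]$, the functional $W\mapsto\deg(\ch_p(G)\cdot W)$ is integer-valued, i.e. it defines an element of $(A^{n-p}(X)_{num})^*$.

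Here is where the hypothesis enters. Surjectivity of $A^p(X)_{num}\to (A^{n-p}(X)_{num})^*$ produces an integral class $a\in A^p(X)_{num}$ with $\deg(a\cdot W)=\deg(\ch_p(G)\cdot W)$ for all $W$; and since the same isomorphism makes the induced rational pairing non-degenerate, the class $\ch_p(G)-a$, pairing to zero with all of $A^{n-p}(X)_{num}\otimes\QQ$, must vanish. Therefore $\ch_p(G)=a\in A^p(X)_{num}$, which is precisely condition (iv) for this $p$, and $\SO_p$ is an isomorphism as explained above. When the duality hypothesis holds for all $p$, all $\SO_p$ are isomorphisms, which is property (i) of Proposition~\ref{equiv}, so $X$ is AK-compatible by the equivalence (i) $\Leftrightarrow$ (ii) established there.

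The main obstacle I expect is the Riemann--Roch bookkeeping in the middle step: one must be certain that no other term of $\ch([\CO_Z])^\vee\cdot\ch(G)\cdot\mathrm{td}_X$ contributes in codimension $n$. This is forced exactly by the two vanishing features used above --- the condition $G\in F^p$ kills the low-codimension part of $\ch(G)$, while the vanishing rank of $\CO_Z$ makes $\ch([\CO_Z])$ start in codimension $n-p$ --- together with $\mathrm{td}_X$ contributing only its constant term. Once the key formula is secured, the remainder is a clean application of the hypothesis in both directions: surjectivity to produce the integral class $a$, and non-degeneracy to identify $\ch_p(G)$ with it.
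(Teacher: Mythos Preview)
Your proof is correct and rests on the same Riemann--Roch identity as the paper's, namely that for $G\in F^pK_0(X)_{num}$ and a codimension-$(n-p)$ cycle $Z$ one has $\chi([\CO_Z],G)=\pm\deg([Z]\cdot\ch_p(G))$. The only packaging difference is that the paper organizes this identity into a commutative square
\[
\xymatrix{
A^p(X)_{num} \ar[rr]^{\SO_p} \ar[d]_\cdot && \gr_F^p K_0(X)_{num} \ar[d]^{(-1)^p\chi} \\
A^{n-p}(X)_{num}^* && \gr_F^{n-p} K_0(X)_{num}^* \ar[ll]_{\SO_{n-p}^*}
}
\]
of finite-index lattice embeddings and concludes by multiplicativity of the index, whereas you route the same computation through condition~(iv) of Proposition~\ref{equiv}; both arguments are equivalent and equally valid.
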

\begin{proof}
Let $Z,W \subset X$ be subschemes of codimension $p$ and $(n-p)$ respectively.
Then~$(\dagger)$ and Riemann--Roch implie that
$$
Z\cdot W = (-1)^p\chi(\CO_Z,\CO_W),
$$
hence we have a commutative diagram
$$
\xymatrix{
A^p(X)_{num} \ar[rr]^{\SO_p} \ar[d]_\cdot && \gr_F^p K_0(X)_{num} \ar[d]^{(-1)^p\chi} \\
A^{n-p}(X)_{num}^* && \gr_F^{n-p} K_0(X)_{num}^* \ar[ll]_{\SO_{n-p}^*}
}
$$
Note that all the maps are finite index embeddings. So, if the left vertical arrow
is an isomorphism then $\SO_p$ is also an isomorphism.
\end{proof}

The results of this section allow to describe $K_0(X)_{num}$
for all Fano threefolds.

\begin{corollary}\label{k0f3}
Let $V$ be a Fano threefold with $\Pic V \cong \ZZ$.
Let $H$ be the generator of $\Pic V$, $L$ a line on $V$, and $P$ a point on $V$.
Then $K_0(X)_{num} = \langle [\CO_V], [\CO_H], [\CO_L], [\CO_P] \rangle$.
\end{corollary}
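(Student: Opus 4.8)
The plan is to reduce everything to AK-compatibility. Since $V$ is a threefold, Corollary~\ref{ak3} tells us that $V$ is AK-compatible, so by Definition~\ref{AK} it suffices to produce, for each codimension $p=0,1,2,3$, a basis of $A^p(V)_{num}$ and to observe that the corresponding structure sheaves are exactly the four classes appearing in the statement. Concretely, I would show that each $A^p(V)_{num}$ is free of rank one, generated by the fundamental class $[V]$, the hyperplane class $H$, the line class $L$, and the point class $P$ respectively; AK-compatibility then immediately yields that $[\CO_V],[\CO_H],[\CO_L],[\CO_P]$ form a $\ZZ$-basis of $K_0(V)_{num}$.

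The rank-one statements for $p=0,1,3$ are essentially formal. For $p=0$ we have $A^0(V)_{num}=\ZZ\cdot[V]$ because $V$ is irreducible, and for $p=3$ the degree map gives $A^3(V)_{num}=\ZZ\cdot P$. For $p=1$ I would invoke the hypothesis $\Pic V\cong\ZZ$: the group of divisors modulo numerical equivalence is then generated by $H$ and is torsion-free, so $A^1(V)_{num}=\ZZ\cdot H$, with $\CO_H$ the structure sheaf of a divisor in the class $H$.

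The only step demanding an argument is $p=2$, and this is where I would be most careful. By the very definition of numerical equivalence the intersection pairing $A^2(V)_{num}\otimes A^1(V)_{num}\to A^3(V)_{num}\cong\ZZ$ induces an injection $A^2(V)_{num}\hookrightarrow A^1(V)_{num}^{*}\cong\ZZ$, so $A^2(V)_{num}$ is a subgroup of $\ZZ$. Since a line satisfies $H\cdot L=1$, the class $L$ is sent to a generator of $\ZZ$, whence the injection is an isomorphism and $A^2(V)_{num}=\ZZ\cdot L$. Combining the four identifications with AK-compatibility of $V$ completes the proof. No finer information about the Chow group of $1$-cycles is needed, precisely because passing to numerical equivalence collapses it to a rank-one lattice detected by intersection with $H$.
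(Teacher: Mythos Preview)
Your argument is correct and follows the same route as the paper: invoke Corollary~\ref{ak3} to get AK-compatibility, then read off the basis from Definition~\ref{AK}. The paper's proof is terser---it simply asserts AK-compatibility (mentioning Lemma~\ref{k0} as an alternative) and appeals to the definition---while you supply the verification that each $A^p(V)_{num}$ is free of rank one on the indicated generator; in particular your $p=2$ step via the pairing with $A^1(V)_{num}$ and $H\cdot L=1$ is exactly the content that the paper leaves implicit.
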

\begin{proof}
We can argue either by Corollary~\ref{ak3} or by Lemma~\ref{k0} that $V$ is AK-compatible.
Hence by definition of AK-compatibility we obtain the required basis.
%
\end{proof}

\begin{remark}
One can combine the results of Lemma~\ref{ch012n} and Lemma~\ref{k0} for the verification
of AK-compatibility. In other words, if for an algebraic variety $X$ the conditions of Lemma~\ref{k0}
are true for all $3 \le p \le n-1$ then $X$ is AK-compatible. Indeed, it is easy to see from the proof
of Proposition~\ref{equiv} that the property (iv${}_p$) for each $p$ implies the property (i${}_p$).

These considerations apply e.g. for cubic fourfolds. Indeed, for $p = 3$ the conditions of Lemma~\ref{k0}
are true, hence the cubic fourfold is AK-compatible.
\end{remark}


\begin{thebibliography}{XXXX}

\bibitem[B]{B} A. Bondal,
{\em Representations of associative algebras and coherent sheaves},
(Russian)  Izv. Akad. Nauk SSSR Ser. Mat. {\bf 53} (1989), no. 1, 25--44;
translation in  Math. USSR-Izv. {\bf 34}  (1990),  no. 1, 23--42.

\bibitem[BK]{BK} A. Bondal, M. Kapranov,
{\em Representable functors, Serre functors, and reconstructions} ,
(Russian)  Izv. Akad. Nauk SSSR Ser. Mat. {\bf 53} (1989), no. 6, 1183--1205, 1337;
translation in  Math. USSR-Izv. {\bf 35} (1990), no. 3, 519--541.

\bibitem[BO1]{BO1} A. Bondal, D. Orlov,
{\em Semiorthogonal decomposition for algebraic varieties},
preprint math.AG/9506012.

\bibitem[Fu]{Fu} Fujita T.,
{\em On the structure of polarized varieties with A-genera zero},
J. Fac. Sci. Univ Tokyo. Sec. IA. 22, 103--115.

\bibitem[Is]{Is} V. Iskovskikh,
{\em Anticanonical models of three-dimensional algebraic varieties},
Current problems in mathematics, VINITI, Moscow, 12, 59--157 (Russian);
translation in J. Soviet Math. 13 (1980) 745--814.

\bibitem[IP]{IP} V. Iskovskikh, Yu. Prokhorov,
{\em Fano varieties},
Algebraic geometry, V, 1--247,
Encyclopaedia Math. Sci., 47, Springer, Berlin, 1999.

\bibitem[K1]{K1} Kuznetsov A.,
{\em An exceptional collection of vector bundles on
$V_{22}$ Fano threefolds}, Vestnik MGU, Ser. 1, Mat. Mekh.,
1996, no. 3, p.~41--44 (in Russian).

\bibitem[K2]{K2} Kuznetsov A.,
{\em Fano threefolds $V_{22}$},
preprint MPIM/97-24.

\bibitem[K3]{K3} Kuznetsov A.,
{\em Derived categories of cubic and $V_{14}$ threefolds},
Proc. V.A.Steklov Inst. Math, V.~246 (2004), p.~183--207;
preprint math.AG/0303037.

\bibitem[K5]{K5} Kuznetsov A.,
{\em Hyperplane sections and derived categories},
Izvestiya RAN: Ser. Mat. 70:3 p.~23--128 (in Russian);
translation in Izvestiya: Mathematics 70:3 p.~447--547.

\bibitem[K6]{K6} Kuznetsov A.,
{\em Derived categories of quadric fibrations and intersections of quadrics},
Advances in Mathematics,  V.~218 (2008), N.~5, 1340-1369.

\bibitem[K8]{K8} Kuznetsov A.,
{\em Lefschetz decompositions and categorical resolutions of singularities},
Selecta Mathematica,  V.~13 (2008), N.~4, 661-696.

\bibitem[K9]{K9} Kuznetsov A.,
{\em Homological projective duality for Grassmannians of lines},
preprint math.AG/0610957.

\bibitem[M]{M} Mukai S.,
{\em Fano 3-folds},
London Math. Soc. Lect. Note Ser. 179, 255--263.

\bibitem[MU]{MU} S. Mukai, H. Umemura,
{\em Minimal rational threefolds},
Algebraic geometry (Tokyo/Kyoto, 1982), 490--518,
Lecture Notes in Math., 1016, Springer, Berlin, 1983.

\bibitem[Or]{Or} D. Orlov,
{\em Exceptional set of vector bundles on the variety $V_5$},
(Russian) Vestnik Moskov. Univ. Ser. I Mat. Mekh. 1991, no. 5, 69--71;
translation in  Moscow Univ. Math. Bull.  {\bf 46}  (1991), no. 5, 48--50.

\bibitem[O1]{O1} D. Orlov,
{\em Projective bundles, monoidal transformations, and derived categories of coherent sheaves},
(Russian)  Izv. Ross. Akad. Nauk Ser. Mat. {\bf 56} (1992), no. 4, 852--862;
translation in  Russian Acad. Sci. Izv. Math. {\bf 41} (1993), no.~1, 133--141.



\end{thebibliography}
\end{document}